\newcommand{\R}{\mathbb{R}}
\newcommand{\N}{\mathbb{N}}
\newcommand{\F}{\mathcal{F}}
\newcommand{\Fb}{\mathbb{F}}
\newcommand{\T}{\mathcal{T}}
\newcommand{\Tp}{{^p\mathcal{T}}}
\newcommand{\V}{\mathcal{V}}
\newcommand{\Vp}{{^p\mathcal{V}}}
\newcommand{\Vo}{\mathcal{V}_0}
\newcommand{\Vc}{\mathcal{V}^1}
\newcommand{\M}{\mathcal{M}}
\newcommand{\A}{\mathcal{A}}
\newcommand{\ind}[1]{\mathbbm{1}_{#1}}
\newcommand{\rs}{RBSDE}
\newcommand{\lrs}{\underline{RBSDE}}
\newcommand{\urs}{\overline{RBSDE}}
\newcommand{\ors}{ORBSDE}
\DeclareMathAccent{\wtilde}{\mathord}{largesymbols}{"65}
\DeclareMathOperator*{\esssup}{ess\,sup}
\newcommand{\cadlag}{c\`adl\`ag }
\theoremstyle{definition}
\newtheorem{ex}{Example}[section]
\newtheorem*{defin}{Definition}
\newtheorem{rem}[ex]{Remark}
\theoremstyle{plain}
\newtheorem{tw}[ex]{Theorem}
\newtheorem{lm}[ex]{Lemma}
\newtheorem{wn}[ex]{Corollary}
\newtheorem{stw}[ex]{Proposition}
\renewcommand\maketitle{\par
  \begingroup
    \renewcommand\thefootnote{\@fnsymbol\c@footnote}%
    \def\@makefnmark{\rlap{\@textsuperscript{\normalfont}}}%
    \long\def\@makefntext##1{\parindent 1em\noindent
            \hb@xt@1.8em{%
                \hss\@textsuperscript{\normalfont}}##1}%
    \if@twocolumn
      \ifnum \col@number=\@ne
        \@maketitle
      \else
        \twocolumn[\@maketitle]%
      \fi
    \else
      \newpage
      \global\@topnum\z@   
      \@maketitle
    \fi
    \thispagestyle{plain}\@thanks
  \endgroup
  \setcounter{footnote}{0}%
  \global\let\thanks\relax
  \global\let\maketitle\relax
  \global\let\@maketitle\relax
  \global\let\@thanks\@empty
  \global\let\@author\@empty
  \global\let\@date\@empty
  \global\let\@title\@empty
  \global\let\title\relax
  \global\let\author\relax
  \global\let\date\relax
  \global\let\and\relax
}
\numberwithin{equation}{section}
\title{Systems of BSDEs with oblique reflection and  related optimal switching problems}
\author{Mateusz Topolewski \footnote{M. Topolewski: Faculty of Mathematics and Computer Science, Nicolaus Copernicus University, Chopina 12/18, \mbox{87--100 Toruń}, Poland. {E-mail: \tt woland@mat.umk.pl}}}
\begin{document}
\date{\scriptsize}
\maketitle
\begin{abstract}
We consider systems of backward stochastic differential  equations
with \cadlag upper barrier $U$ and oblique reflection from below driven by
an increasing continuous function $H$. Our equations are defined
on general probability spaces with a filtration satisfying merely
the usual assumptions of right continuity and completeness. We
assume that the pair $(H(U),U)$
satisfies a Mokobodzki--type condition. We prove the existence of a
solution for integrable terminal conditions and integrable
quasi--monotone generators. Applications to the optimal switching
problem are given.
\end{abstract}

{\noindent\it MSC:} Primary 60H10; Secondary 60H30, 91B70.

\vspace{5pt}

{\noindent\it Keywords:} systems of BSDEs, oblique reflection,  $L^1$ data, quasi-monotone generator, optimal switching problem. 

\section{Introduction and notation}
\label{sec1}

In this paper, we study the problem of existence and uniqueness of a solution of system of backward stochastic differential equations (BSDEs for short) with oblique reflection from below and fixed upper barrier. The main new feature is that we deal with equations on probability spaces with general filtration $\Fb =\{\F_t, t \in [0, T]\}$ satisfying only the usual conditions of right-continuity and completeness. Moreover, we deal with equations with  $L^1$-data.

Let $T>0$ and $d\in\N.$ Suppose we are given an $\F_T$-measurable random vector $\xi=(\xi^1,\xi^2,\ldots,\xi^d)$, a progressively measurable function $f=(f^1,f^2,\ldots,f^d):\Omega\times[0,T]\times\R^d\to\R^d$, an adapted $\R^d$--valued \cadlag
process $U=(U^1,U^2,\ldots,U^d)$ and a function $H=(H^1,H^2,\ldots,H^d):\Omega\times[0,T]\times\R^d\to\R^d$ such that $H^j(y)$ does not depend on $y^j$ for every $y\in\R^d$. Roughly speaking, by a solution of a system of BSDEs with terminal condition $\xi$, generator $f$, oblique reflection driven by $H$ and upper barrier $U$, we mean a~quadruple $(Y,M,K,A)=\{(Y^j,M^j,K^j,A^j)\}_{j=1,\ldots,d}$ of \cadlag adapted processes such that $Y$
is of Doob's class D, $M$ is a local martingale with $M_0=0$, $K,A$ are increasing processes such that
$K_0=A_0=0$, and a.s. we have
\begin{equation}\label{eq.1.1}
\begin{cases}
Y^j_t=\xi^j+ \int_t^T f^j(r,Y_r)\,dr +\int_t^T dK^j_r -
\int_t^T dA^j_r - \int_t^T dM^j_r,& t\in[0,T],\smallskip\\
\int_0^T(Y^j_{r-}-H^j_{r-}(Y_{r-}))\,dK^j_r=\int_0^T (U^j_{r-}-Y^j_{r-})\,dA^j_r=0,\smallskip\\
H^j_t(Y_t)\leq Y^j_t\leq U^j_t, \qquad t\in[0,T]
\end{cases}
\end{equation}
for $j=1,\ldots,d$ (see Section \ref{sec3} for detailes). In the case where $d=1$,  \eqref{eq.1.1}  reduces to the one-dimensional reflected BSDE with upper barrier $U$, which was
thoroughly  investigated in Klimsiak \cite{klimsiak2014}.
Therefore, in the present paper, we consider the case where $d\ge2$.

Our motivation for considering such a general setting comes from the
theory of stochastic control (the optimal switching problem). Let us consider a power station that  can
produce electricity in one of $d$ modes which can be switched in
time. Let $\{\theta_n\}$ be an increasing sequence of stopping
times (switching times) such that $N=\inf\left\{ n\in\N\;:\;
\theta_n=T \right\} <\infty$ $P$-a.s. Define an admissible
switching control (switching strategy) as a stochastic process of
the form
\begin{equation}\label{eq.1.3}
a(t) = \sum_{n=0}^{N-1}\alpha_n\ind{[\theta_n,\theta_{n+1})}(t) + \alpha_N\ind{\{\theta_N\}}(t),\quad t\in[\theta_0,T],
\end{equation}
where $\alpha_n$ is an $\F_{\theta_n}$-measurable random variable
with values in $\{1,\ldots,d\}$. By $\A^j_t$ we denote the set
of all admissible switching controls with the initial data
$(\alpha_0,\theta_0)=(j,t)$. Assume  that the price of electricity
is given by some process $X$ and in the $j$-th mode the income of
the station is driven by a function $f^j(\cdot,X)$. In the classical model, the profitability
$R^{(a)}_0$  of the power station on the interval $[0,T]$ under a~switching strategy $a$ is given by
\begin{equation*}
R_0^{(a)}:= R_0^{(a)}(X)= \int_0^T f^{a(r)}(r,X_r)\,dr - \sum_{n=1}^{N} c^{\alpha_{n-1}, \alpha_n}(\theta_n),
\end{equation*}
where $c^{j,k}$,  $j,k=1,\ldots,d$, are some positive $\Fb^X$-adapted processes. The values $c^{\alpha_{n-1}, \alpha_n}(\theta_n)$ can be regarded as costs of switching from  mode $\alpha_{n-1}$ to $\alpha_n$ at time $\theta_n$.

In this paper, we consider a modification of the above  classical
model. In our model the profitability is limited by some external
upper barrier. More precisely, we assume that for a switching
strategy $a\in\A^j_t$ of the form \eqref{eq.1.3} the profit of the
power station is given by the first component of the solution
$(R^{(a)}, M^{(a)},D^{(a)})$ of the reflected BSDE of the form
\begin{equation}\label{eq.1.4}
\begin{cases}
R_s^{(a)}= \xi^{a(T)}+ \int_s^T f^{a(r)}(r,X_r)\,dr\\
\qquad\quad -\sum_{n=1}^{N} c^{\alpha_{n-1}, \alpha_n}(\theta_n)\ind{(s,T]}(\theta_n)  - \int_s^T dD^{(a)}_r - \int_s^T dM^{(a)}_r,\quad s\in[t,T],\smallskip\\
R^{(a)}_s\leq U^{a(s)}_s,\quad s\in[t,T],\smallskip\\
\int_t^T (U^{a(r-)}_{r-} - R^{(a)}_{r-})\, dD^{(a)}_r=0.
\end{cases}
\end{equation}
In (\ref{eq.1.4}),  $U=(U^1,U^2,\ldots,U^d)$ is a given $\Fb^X$-adapted process such that $U^{i,-}=(-U^i)\vee0$, $i=1,\dots,d$, is of class D. If
\begin{equation}\label{eq.1.5}
E\Big(\int_0^T |f^{a(r)}(r,X_r)|\,dr + \sum_{n=1}^{N} |c^{\alpha_{n-1}, \alpha_n}(\theta_n)|\Big)<\infty,
\end{equation}
then the existence and uniqueness of a solution of \eqref{eq.1.4}
is ensured by \cite[Theorem 2.3(ii)]{MT2016}.
Since the process $U$ is $\Fb^X$-adapted, without loss of generality one can assume that $U=U(X)$ ($U$ is functionally dependent on $X$). The barrier $(U_t^{a(t)})_{t\in[0,T]}$ can be interpreted as an upper limit for the profitability when one uses the control $a$. This limitation can be a~result of some external economic factors (production-possibilities and storage capacity may limit the profits) or legal regulations, see, e.g., Brenann and Schwartz \cite{Brenann1986}, Dixit and Pindyck \cite[Chapter 9]{Dixit1994}, Grenadier \cite{Grenadier1996,Grenadier1999,Grenadier2002}, McDonald and Siegel \cite{McDonald1986}.
The main goal is
to find, for given $t\in[0,T]$, a~strategy $\widehat a$ which
maximizes the profitability of the power station, i.e. a strategy
$\widehat a\in\A^j_t$ such that
\[
R^{(\widehat a)}_t=\esssup_{a\in\A^j_t} R^{(a)}_t,
\]
for $t\in[0,T]$ and $j=1,2,\ldots,d.$

In the paper we  show that the above optimal stopping  problem  is
closely related to systems of equations of the form (\ref{eq.1.1}) with
filtration generated by the price process $X$ and $H$ defined as
\begin{equation}\label{eq.1.2}
H^j_t(y)=\max_{k\neq j} (y^k-c^{j,k}(t)),\quad j=1,\ldots,d.
\end{equation}
In (\ref{eq.1.2}), $\{c^{j,k}\}_{j,k=1,\ldots, d}$ are switching costs considered in \eqref{eq.1.4}, and they are assumed to be continuous.
In general, the price process $X$ may have jumps (for instance, $X$ can be modelled by a L\'evy process). This is why it is important to consider equations of the form (\ref{eq.1.1}) with general filtration.

To our knowledge,
the paper by Tang, Zhong and Koo \cite{TangZhongKoo2013} is the only paper devoted to  equations with oblique reflection from below and fixed upper barrier. In \cite{TangZhongKoo2013}  the existence and uniqueness of a solution to \eqref{eq.1.1} is showed under the assumption that the data are $L^2$-integrable, $f$ is Lipschitz continuous with respect to the space variable $y$ and the upper barrier $U$ is continuous. In \cite{TangZhongKoo2013} it is also assumed that the underlying filtration is Brownian. Problem (\ref{eq.1.1})  without upper barrier is considered in  \cite{HamJean2007,HamZhang2010,HuTang2010,Klimsiak2016}. In these papers the setting is similar to that in \cite{TangZhongKoo2013}. In particular, only a Brownian filtration is considered.
An interesting,  different approach to optimal switching problem with constants switching costs and related reflected BSDEs  is presented in the recent paper by Chassagneux and Richou \cite{CH}.
As a matter of fact, in \cite{CH} much more general than in \cite{HamJean2007,HamZhang2010,HuTang2010} equations with Brownian filtration and oblique reflection are considered. Klimsiak \cite{Klimsiak2016} studied (\ref{eq.1.1}) without upper barrier, with  general filtration $\Fb$, $L^1$-data and quasi--monotone generator $f$. Moreover,  he considers  oblique functions more general than \eqref{eq.1.2}. In \cite{Klimsiak2016} it is proved that for the existence result it suffices to assume that $H^j$ is continuous and increasing with respect to $y$ for $j=1,\ldots,d.$ If moreover $H$ is of the form (\ref{eq.1.2}), then the solution is unique.

Our main theorem states that if $f,\xi,H$ satisfy the assumptions adopted in \cite{Klimsiak2016} and $U$ is a~\cadlag process such that $H^j(U),U^j$ satisfy the so-called Mokobodzki condition for $j=1,\ldots,d$ (see hypothesis (M)), then there exists a solution of \eqref{eq.1.1}. Thus, we generalize the existence result of \cite{Klimsiak2016} to equations with upper barrier, and at the same time we generalize the results of  \cite{TangZhongKoo2013} in the sense that we consider problem (\ref{eq.1.1}) with general filtration and much weaker assumptions on the data.
Like in \cite{TangZhongKoo2013},
the existence of a solution to (\ref{eq.1.1}) is proved by the Picard iteration method. However, because of the general filtration and weak assumption on the data, our proof is more involved. Also note that in our proof we use in an essential way  some results on  one-dimensional reflected BSDEs with general filtration proved in \cite{klimsiak2014,MT2016}.
We are not able to prove that the solution to (\ref{eq.1.1}) is unique for general $H$. However, we show that the uniqueness for (\ref{eq.1.1}) holds true if $f^j(t,y)$ does not depend on $y^1,\ldots,y^{j-1},y^{j+1},\ldots,y^d$ and $H$ is of  the form \eqref{eq.1.2}. This is done by proving Proposition \ref{prop.4.4}, which links  solutions of \eqref{eq.1.1} with  solutions of some optimal switching problem.

This paper is organized as follows. Section \ref{sec2} contains a short review of properties of one--dimensional reflected BSDEs with one and two barriers. In Section \ref{sec3} we consider systems of BSDEs with oblique reflection and prove of the main existence result. Finally, in Section \ref{sec4} we give an application of the results of Section \ref{sec3} to the optimal switching problem and state the uniqueness result.

{\bf Notation.} Let  $T>0$, and let
$(\Omega,\F,\Fb=\{\F_t\}_{t\in[0,T]},P)$  be a filtered
probability space with filtration satisfying  the usual
assumptions of completeness and right continuity. By $\T$  we
denote the set of all $\Fb$--stopping times such that $\tau\leq
T$, and by $\T_t$, $t\in[0,T]$, the set of $\tau\in\T$ such that
$P(\tau\geq t)=1$.
By $\V$ we denote the set of all $\Fb$-progressively measurable
processes of finite variation, and by $\Vc$ the subset of $\V$
consisting of all processes $V$ such that $E|V|_T<\infty$, where
$|V|_T$ stands for the variation of $V$ on $[0,T]$. $\Vo$ is  the
subset of $\V$ consisting of all  processes $V$ such that $V_0=0$,
$\Vo^+$ (resp. $\Vp_0^+$) is the subset of $\Vo$ of all increasing
processes (resp. predictable increasing processes). We also define $\Vc_0=\Vc\cap\V_0.$ $\M$ (resp.
$\M_{loc}$) denotes the set of all $\Fb$-martingales (resp. local
martingales). By $L^1(\Fb)$ we denote the space of all
$\Fb$-progressively measurable processes $X$ such that $E\int_0^T
|X_t|dt<\infty$, and by $L^{1}(\F_T)$ the  space of all
$\F_T$-measurable random variables $\xi$ such that
$E|\xi|<\infty$.

\section{One-dimensional reflected BSDEs}
\label{sec2}

For  completeness of exposition and future reference, in this section we gather some results on one-dimensional reflected BSDEs with one and two reflecting barriers. These results are taken mainly from \cite{klimsiak2014,Klimsiak2016} (with some minor modifications).

In what follows $L$ is some \cadlag process, $\xi$ is $\F_T$--measurable random variable such that $\xi\geq L_T$, $V\in\V_0$ and $f:\Omega\times[0,T]\times\R\to\R$ is a measurable function such that for every $y\in\R$ the process $f(\cdot,y)$ is $\Fb$-progressively measurable. The following assumptions will be needed throughout this section. Similar assumptions were considered in \cite{klimsiak2014, MT2016}.
\begin{enumerate}[(H1)]
\item For almost
every $t\in[0,T]$ and all $y,y'\in\R$,
\[
(f(t,y)-f(t,y'))(y-y')\leq 0,
\]
\item $\int_0^T|f(r,y)|\,dr<\infty$ for every $y\in\R$,
\item the function $\R\ni y\mapsto f(t,y)$ is continuous  for almost
every $t\in[0,T]$,
\item $\xi\in L^1(\F_T)$, $V\in\Vo\cap\V^1$, and there exists a~\cadlag semimartingale $S$ being a~difference of supermartingales of class {D} such that
\begin{equation}\label{eq.3.1}
E\int_0^T |f(r,S_r)|\,dr<\infty.
\end{equation}
\end{enumerate}

\begin{rem}\label{rem.3.1}
Let $S$ be a \cadlag semimartingale which is a difference of supermartingales of class D. Then there exist $C\in\Vc_0$ and $N\in\M$ such that $N_0=0$ and
\begin{equation}\label{eq.3.01}
S_t=S_0+C_t+N_t.
\end{equation}
Indeed, let $S=S^1 - S^2$, where $S^1,S^2$ are some supermartingales of class D. Then, by the Doob--Meyer decomposition (see \cite[Theorem 2]{Meyer1962} or \cite[Theorem III.11]{Protter}), there exist $C^1,C^2\in\V_0^+\cap\Vc$ and $N^1,N^2\in\M$ with $N^1_0=N^2_0=0$ such that
\[
S^1_0=S^1_0-C^1_t+N^1_t,\qquad S^2_0=S^2_0-C^2_t+N^2_t, \qquad t\in[0,T].
\]
Setting $C=-C^1+C^2$ and $N=N^1-N^2$ yields (\ref{eq.3.01}).
\end{rem}

\begin{defin}
We say that a triple $(Y,M,K)$ of \cadlag processes is a
solution  of the reflected BSDE with terminal condition $\xi$,
generator $f+dV$ and lower barrier $L$ (we write $\lrs(\xi,f+dV,L)$ for short) if
\begin{enumerate}[(i)]
\item $Y$ is a process of class D, $K\in\Vp_0^+$,
$M\in\M_{loc}$ with $M_0=0$,
\item $L_t\leq Y_t$ for $t\in[0,T]$, $\Pr$-a.s.,
\item $\int_0^T(Y_{r-}-L_{r-})\,dK_r=0$,
\item
$Y_t=\xi+ \int_t^T f(r,Y_r)\,dr + \int_t^T dV_r +\int_t^T dK_r- \int_t^T dM_r$, $t\in[0,T]$, $P$-a.s.
\end{enumerate}
\end{defin}

The following theorem shows how to prove assertions of \cite[Theorem 2.13]{klimsiak2014} under modified assumptions.

\begin{stw}\label{stw.2.1}
Assume that $\xi,f,V$ satisfy \emph{(H1)--(H4)} and there exists a process $X$ being a difference of supermartingales of class {\em D} such that $X_t\geq L_t$ for $t\in[0,T]$ and $E\int_0^T |f(r,X_r)|\,dr<\infty$. Then there exists a solution $(Y,M,K)$ of $\lrs(\xi,f+dV,L)$ such that $\int_0^T |f(r,Y_r)|\,dr<\infty$.
\end{stw}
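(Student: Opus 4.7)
The plan is to reduce, by a vertical shift, to the situation where the lower barrier is non-positive, which is already covered by \cite[Theorem 2.13]{klimsiak2014}. Using Remark~\ref{rem.3.1}, I would decompose $X = X_0 + C + N$ with $C \in \Vc_0$ and $N \in \M$, $N_0 = 0$, and introduce the shifted data
\[
\bar\xi := \xi - X_T, \qquad \bar V := V + C, \qquad \bar f(t,y) := f(t,\,y + X_t), \qquad \bar L := L - X.
\]
The crucial observation is that $\bar L \le 0$ by the assumption $X \ge L$. A short computation using the decomposition of $X$ shows that $(Y, M, K)$ is a solution of $\lrs(\xi, f + dV, L)$ if and only if $(\bar Y, \bar M, K) := (Y - X,\, M - N,\, K)$ is a solution of $\lrs(\bar\xi,\, \bar f + d\bar V,\, \bar L)$.

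Next I would verify that the shifted data meet (H1)--(H4) and the dominating-semimartingale hypothesis of \cite[Theorem 2.13]{klimsiak2014}. Monotonicity and continuity in $y$ pass to $\bar f$ without change. We have $\bar\xi \in L^1(\F_T)$ since $X_T$ lies in $L^1$ as a difference of class-D supermartingales, and $\bar V \in \Vo \cap \Vc$ since $C \in \Vc_0$. To check (H2) for $\bar f$, for a.e.\ $\omega$ the path $r \mapsto X_r(\omega)$ is bounded on $[0,T]$, so for any fixed $y$ one may pick $M = M(\omega, y)$ with $|y + X_r(\omega)| \le M$ for every $r$; the monotonicity of $f$ then gives the pathwise bound $|f(r, y + X_r(\omega))| \le |f(r, -M)| + |f(r, M)|$, which is integrable by (H2) for $f$ at the constants $\pm M$. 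Finally, the choice $\bar S \equiv 0$ dominates $\bar L$ and satisfies $E\int_0^T |\bar f(r, 0)|\,dr = E\int_0^T |f(r, X_r)|\,dr < \infty$, so the strengthened (H4) required in \cite[Theorem 2.13]{klimsiak2014} holds for the shifted problem.

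With these checks in place, \cite[Theorem 2.13]{klimsiak2014} yields a solution $(\bar Y, \bar M, K)$ of $\lrs(\bar\xi, \bar f + d\bar V, \bar L)$ with $\int_0^T |\bar f(r, \bar Y_r)|\,dr < \infty$. Setting $Y := \bar Y + X$ and $M := \bar M + N$, I would translate each item of the $\lrs$--definition back through the shift: $Y \ge L$ is equivalent to $\bar Y \ge \bar L$; $Y$ is of class D since $\bar Y$ is and $X$ is a difference of class-D supermartingales; $M \in \M_{loc}$ because $\bar M \in \M_{loc}$ and $N \in \M$; the reflection condition $\int_0^T (Y_{r-} - L_{r-})\,dK_r = \int_0^T (\bar Y_{r-} - \bar L_{r-})\,dK_r = 0$ is preserved; and $\int_0^T |f(r, Y_r)|\,dr = \int_0^T |\bar f(r, \bar Y_r)|\,dr < \infty$. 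The main obstacle I anticipate is the bookkeeping in the second step, in particular ensuring that the pathwise sandwich for $\bar f$ actually delivers (H2) and that all four assumptions transfer cleanly so that \cite[Theorem 2.13]{klimsiak2014} applies in its stated form; both the shift itself and the translation back are routine.
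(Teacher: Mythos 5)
Your argument is correct, and it follows the same overall template as the paper's proof (shift the data by a semimartingale via Remark \ref{rem.3.1} so that \cite[Theorem 2.13]{klimsiak2014} applies, then shift back), but the choice of shift is genuinely different. The paper shifts by the semimartingale $S$ from (H4), so that the transformed generator $f_S=f(\cdot,S+\cdot)$ satisfies the integrability-at-zero condition $E\int_0^T|f_S(r,0)|\,dr<\infty$ required by the cited theorem, while the dominating process for the shifted barrier $L-S$ becomes $X-S$. You instead shift by the dominating process $X$ itself, which makes the shifted barrier $\bar L=L-X$ non-positive and lets the zero process play both roles at once: it dominates $\bar L$ and satisfies $E\int_0^T|\bar f(r,0)|\,dr=E\int_0^T|f(r,X_r)|\,dr<\infty$. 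A noteworthy by-product is that your proof never uses the integrability condition \eqref{eq.3.1} on $S$ --- only the parts of (H4) concerning $\xi$ and $V$ --- so it in fact establishes the proposition under a slightly weaker hypothesis; the condition on $S$ is what carries Proposition \ref{stw.3.1}, where no integrable dominating process above the lower barrier is assumed. All the individual verifications (transfer of (H1)--(H3) to $\bar f$, the pathwise sandwich giving (H2) via monotonicity of $f$ and boundedness of the paths of $X$, integrability of $X_T$ from the class-D property, and the back-translation of each item of the definition of $\lrs$) match what the paper does with $S$ in place of $X$ and are sound.
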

\begin{proof}
We begin with showing that $f_S=f(\cdot, S+\cdot)$  satisfies
hypotheses (H1)--(H3) and (H4) with the semimartingale $S=0$. By (H1),
\[
(f(t,S_t+y)-f(t,S_t+y'))(y-y')\leq 0, \qquad t\in[0,T], \; y,y'\in\R.
\]
Set $\underline{S}=\inf_{t\in[0,T]}S_t$ and
$\overline{S}=\sup_{t\in[0,T]}S_t$. Since $S$ has \cadlag paths,
$\underline{S},\overline{S}$ are finite. Since $f$ satisfies (H1),
\[
f(t,\overline{S}+y)\leq f(t,S_t+y)\leq f(t,\underline{S}+y),\qquad t\in[0,T],\; y\in\R.
\]
By this and (H2), $f_S$ satisfies (H2). Clearly, $f_S$ also satisfies  (H3) and (H4).
Note that $X-S$ is a difference of supermartingales of class D such that $E\int_0^T |f_S(r,X_r-S_r)|\,dr<\infty$ and, by Remark \ref{rem.3.1}, $S$ admits  decomposition \eqref{eq.3.01} with $C\in\V^1_0$ and $N\in\M$. Therefore, by \cite[Theorem 2.13]{klimsiak2014}, there exists a solution $(Y^S,M^S,K^S)$ of the equation $\lrs(\xi-S_T,f_S+dV+dC,L-S)$, i.e.
\[
Y^S_t=\xi-S_T + \int_t^T f(r,S_r+Y^S_r)\,dr + \int_t^T d(V_r+C_r) +\int_t^TdK^S_r -\int_t^TdM^S_r.
\]
Moreover, $E\int_0^T |f_S(r,Y^S_r)|\,dr<\infty$. Write $(Y,M,K)=(Y^S+S,M^S+N,K^S)$. Then
\[
Y_t=\xi+ \int_t^T f(r,Y_r)\,dr + \int_t^T dV_r +\int_t^T dK_r- \int_t^T dM_r.
\]
We also have
\[
\int_0^T (Y_{r-}-L_{r-})\,dK_r=\int_0^T (Y^S_{r-} -(L_{r-}-S_{r-}))\,dK^S_r=0
\]
and $E\int_0^T |f(r,Y_r)|\,dr=E\int_0^T |f_S(r,Y^S_r)|\,dr<\infty,$
which shows that $(Y,M,K)$ is a solution of $\lrs(\xi,f+dV,L)$ and proves the proposition.
\end{proof}
\begin{rem}\label{rem.2.1}
(i) Let $(Y,M,K)$ be a solution of an equation $\lrs(\xi,f+dV,L)$.  For $\tau\in\T$ set $f^\tau(t,y)=f(t,y)\ind{[0,\tau]}$. Then the triple $(Y_{\cdot\wedge\tau},M_{\cdot\wedge\tau},K_{\cdot\wedge\tau})$ is a solution of $\lrs(Y_\tau, f^\tau+dV_{\cdot\wedge\tau},L_{\cdot\wedge\tau})$. To show this it is  enough to repeat step by step the proof of \cite[Lemma 3.5]{MT2016}. \smallskip\\
(ii) Assume additionally that
\begin{equation}\label{eq.2.1}
E\Big( \int_0^\tau |f(r,Y_r)|\,dr + |V|_\tau \Big)<\infty.
\end{equation}
Then for any $t\in[0,T]$,
\begin{equation}\label{eq.2.2}
Y_{t\wedge\tau}=\esssup_{\sigma\in\T_t}E\Big(\int_{t\wedge\tau}^{\sigma\wedge\tau}f(r,Y_r)\,dr + \int_{t\wedge\tau}^{\sigma\wedge\tau} dV_r + L_{\sigma}\ind{\{\sigma<\tau\}}+Y_\tau\ind{\{\sigma\geq\tau\}}\Big|\F_t\Big).
\end{equation}
Indeed, the triple $(Y_{\cdot\wedge\tau}, M_{\cdot\wedge\tau},K_{\cdot\wedge\tau})$ is a solution of $\lrs(Y_\tau, f^\tau+dV_{\cdot\wedge\tau},L_{\cdot\wedge\tau})$ on $[0,\tau]$ in the sense of \cite[Definition 3.5]{Klimsiak2016}. 
Therefore (\ref{eq.2.2}) follows from  \cite[Remark 3.7]{Klimsiak2016}.
\end{rem}

\begin{rem}\label{rem.2.2}
Let $f,V,L$ satisfy the assumptions of Proposition \ref{stw.2.1}. Assume that $Y$ is a~process of class D such that \eqref{eq.2.1} and \eqref{eq.2.2} are satisfied. Then there exist processes $K,M$ such that $(Y_{\cdot\wedge\tau},M_{\cdot\wedge\tau},K_{\cdot\wedge\tau})$ is a solution of $\lrs(Y_\tau, f^\tau+dV_{\cdot\wedge\tau},L_{\cdot\wedge\tau})$. In particular,
\[
\int_0^\tau(Y_{r-}-L_{r-})dK_r=0.
\]
Indeed, by \eqref{eq.2.2}, \cite[App. I.22 Theorem]{DellMey2} and the Doob--Meyer decomposition theorem, there exist $K\in\Vp_0^{+}$ and $M\in\M$ such that
\[
Y_{t\wedge\tau}+\int_0^{t\wedge\tau}f(r,Y_r)\,dr + \int_0^{t\wedge\tau}dV_r=Y_0 - K_{t\wedge\tau} + M_{t\wedge\tau},\quad t\in[0,T].
\]
Recall that the assertions of \cite[Theorem 2.13]{klimsiak2014} remains true under the assumptions of Proposition \ref{stw.2.1} (see the proof of Proposition \ref{stw.2.1} for details). By \cite[Theorem 2.13, Corollary 2.2]{klimsiak2014}, there exists a~unique solution $(Y',M',K')$ of $\lrs(Y_\tau, f^\tau+dV_{\cdot\wedge\tau},L_{\cdot\wedge\tau})$. By Remark \ref{rem.2.1}(i), $(Y',M',K')=(Y'_{\cdot\wedge\tau},M'_{\cdot\wedge\tau},K'_{\cdot\wedge\tau})$. Therefore, by \cite[Lemma 2.8, Theorem 2.13]{klimsiak2014}, $Y'_{t\wedge\tau}\leq Y_{t\wedge\tau}$ $P$-a.s. for $t\in[0,T]$.
Moreover, by \cite[Theorem 2.13, Corollary 2.9]{klimsiak2014}, \eqref{eq.2.2} holds with $Y$ replaced by $Y'$. By this and (H1),
\[
Y_{t\wedge\tau}-Y'_{t\wedge\tau}\leq \esssup_{\sigma\in\T_t} E\Big(\int_{t\wedge\tau}^{\sigma\wedge\tau}\big(f(r,Y_r)-f(r,Y'_r)\big)\,dr \Big|\F_t\Big)\leq 0,\quad t\in[0,T],
\]
so in fact $Y_{t\wedge\tau}=Y'_{t\wedge\tau}$, $t\in[0,T]$. Hence, by uniqueness of the Doob-Meyer decomposition, $(Y'_{\cdot\wedge\tau},M'_{\cdot\wedge\tau},K'_{\cdot\wedge\tau})
=(Y_{\cdot\wedge\tau},M_{\cdot\wedge\tau},K_{\cdot\wedge\tau}),$ which completes the proof of the desired assertions.
\end{rem}

Let $U$ be a \cadlag process such that $L_t\leq U_t$ for $t\in[0,T]$ and $L_T\leq\xi\leq U_T.$
\begin{defin}
We say that a quadruple $(Y,M,K,A)$ of \cadlag processes is a
solution  of the reflected BSDE with terminal condition $\xi$,
generator $f+dV$, lower barrier $L$ and upper barrier $U$
($\rs(\xi,f+dV,L,U)$ for short) if
\begin{enumerate}[(i)]
\item $Y$ is a process of class D, $A,K\in\Vp_0^+$,
$M\in\M_{loc}$ with $M_0=0$,
\item $L_t\leq Y_t\leq U_t$ for $t\in[0,T]$, $\Pr$-a.s.,
\item $\int_0^T(Y_{r-}-L_{r-})\,dK_r=\int_0^T
(U_{r-}-Y_{r-})\,dA_r=0$,
\item
$Y_t=\xi+ \int_t^T f(r,Y_r)\,dr + \int_t^T dV_r +\int_t^T d(K_r
-A_r)- \int_t^T dM_r$, $t\in[0,T]$, $P$-a.s.
\end{enumerate}
\end{defin}

The following fact shows how to prove assertions of \cite[Theorem 4.2]{klimsiak2014} in our setting.

\begin{stw}\label{stw.3.1}
Assume that $\xi,f,V$ satisfy \emph{(H1) -- (H4)}.
If there exists a \cadlag semimartingale $X$ being a difference of two supermartingales of class {\em D} and such that $L_t\leq X_t\leq U_t$ for $t\in[0,T]$, then there exists a solution of $\rs(\xi,f+dV,L,U).$
\end{stw}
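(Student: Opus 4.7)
The plan is to apply the translation-by-$S$ technique from the proof of Proposition \ref{stw.2.1} to reduce the two-barrier problem to one already covered by \cite[Theorem 4.2]{klimsiak2014}. First I would decompose the reference semimartingale $S$ from (H4) as $S=S_0+C+N$ with $C\in\Vc_0$ and $N\in\M$, $N_0=0$, using Remark \ref{rem.3.1}.

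Next, I would consider the shifted data
\[
\bar f(t,y)=f(t,S_t+y),\quad \bar\xi=\xi-S_T,\quad \bar V=V+C,\quad \bar L=L-S,\quad \bar U=U-S.
\]
The computations carried out for $f_S$ in the proof of Proposition \ref{stw.2.1} show that $\bar f$ satisfies (H1)--(H3); hypothesis (H4) then holds for $(\bar\xi,\bar V,\bar f)$ with the trivial reference $\bar S\equiv 0$, since $E\int_0^T|\bar f(r,0)|\,dr=E\int_0^T|f(r,S_r)|\,dr<\infty$, $\bar V\in\Vo\cap\V^1$, and $\bar\xi\in L^1(\F_T)$ because $\xi\in L^1(\F_T)$ and $S_T$ is integrable (as the terminal value of a difference of class~D processes). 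Moreover $\bar L\leq\bar U$, $\bar L_T\leq\bar\xi\leq\bar U_T$, and the \cadlag process $\bar X:=X-S$ is a difference of supermartingales of class~D (as $X$ and $S$ are) satisfying $\bar L\leq\bar X\leq\bar U$.

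At this point \cite[Theorem 4.2]{klimsiak2014} would apply to the shifted data and yield a solution $(\bar Y,\bar M,\bar K,\bar A)$ of $\rs(\bar\xi,\bar f+d\bar V,\bar L,\bar U)$. Setting $(Y,M,K,A):=(\bar Y+S,\bar M+N,\bar K,\bar A)$ and exploiting the identity $S_T-S_t=\int_t^T dC_r+\int_t^T dN_r$, condition (iv) of the definition of $\rs(\xi,f+dV,L,U)$ follows directly from the corresponding identity for $(\bar Y,\bar M,\bar K,\bar A)$. The barrier inequality (ii) and the Skorokhod conditions (iii) transfer verbatim since $Y-L=\bar Y-\bar L$, $U-Y=\bar U-\bar Y$, $K=\bar K$ and $A=\bar A$; and $Y$ is of class~D because both $\bar Y$ and $S$ are.

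The main obstacle is arranging the translation so that \emph{all} the hypotheses of \cite[Theorem 4.2]{klimsiak2014} survive simultaneously, in particular that $\bar X=X-S$ retains the class~D difference-of-supermartingales property between the translated barriers and that the integrability required at the new (H4) reference comes for free from the one at $S$. None of these checks is deep, but the reduction relies on the same piece of bookkeeping already illustrated in the proof of Proposition \ref{stw.2.1}.
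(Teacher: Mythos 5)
Your proposal is correct and follows essentially the same route as the paper: translate all data by the reference semimartingale $S=S_0+C+N$ from Remark \ref{rem.3.1}, apply \cite[Theorem 4.2]{klimsiak2014} to the shifted problem (noting that $X-S$ remains a difference of class~D supermartingales squeezed between the shifted barriers), and recover the solution as $(Y,M,K,A)=(\bar Y+S,\bar M+N,\bar K,\bar A)$. The verification that the shifted generator satisfies (H1)--(H4) with trivial reference semimartingale is exactly the bookkeeping the paper delegates to the proof of Proposition \ref{stw.2.1}.
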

\begin{proof}
Let $f_S,C,N$ be defined as in the proof of Proposition \ref{stw.2.1}.
By the assumptions, $L_t-S_t\leq X_t-S_t\leq U_t-S_t$ for $t\in[0,T]$ and $X-S$ is a difference of supermartingales od class D. Therefore, by \cite[Theorem 4.2]{klimsiak2014},  there exists a solution $(Y^S,M^S,K^S,A^S)$ of $\rs(\xi-S_T,f_S+dV+dC,L-S,U-S)$, i.e.
\[
Y^S_t=\xi-S_T + \int_t^T f(r,S_r+Y^S_r)\,dr + \int_t^T d(V_r+C_r) +\int_t^Td(K^S-A^S)_r -\int_t^TdM^S_r.
\]
Write $(Y,M,K,A)=(Y^S+S,M^S+N,K^S,A^S)$. Then
\[
Y_t=\xi+ \int_t^T f(r,Y_r)\,dr + \int_t^T dV_r +\int_t^T d(K
-A)_r- \int_t^T dM_r.
\]
Moreover,
\[
\int_0^T (Y_{r-}-L_{r-})\,dK_r=\int_0^T (Y^S_{r-} -(L_{r-}-S_{r-}))\,dK^S_r=0
\]
and
\[
\int_0^T (U_{r-}-Y_{r-})\,dK_r=\int_0^T ((U_{r-}-S_{r-})-Y^S_{r-})\,dA^S_r=0,
\]
which shows that $(Y,M,K,A)$ is a solution of $\rs(\xi,f+dV,L,U)$.
\end{proof}

\begin{defin}
We say that $(Y,M,A)$ is a solution of $\urs(\xi,f+dV,U)$ if $(-Y,-M,A)$ is a solution of $\lrs(-\xi,-f(\cdot,-\cdot)-dV, -U)$.
\end{defin}

\begin{lm}\label{lm.3.3}
Let the assumptions of Proposition \ref{stw.3.1} hold, and let  $(Y^i,M^i,K^i,A^i)$, $i=1,2$, be a~solution of $\rs(\xi^i, f^i+dV^i, L^i,U)$. Assume that $\xi^1\leq\xi^2$, $dV^1\leq dV^2$, $L^1_t\leq L^2_t$ for $t\in[0,T]$. If $f^1(t,y)\leq f^2(t,y)$ for $t\in[0,T]$, $y\in\R$, then $dA^1\leq dA^2.$
\end{lm}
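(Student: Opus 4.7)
The plan is to prove the lemma by a penalization scheme that converts the two doubly--reflected equations into a comparable pair of lower--reflected equations. For each $n\in\N$ and $i=1,2$ I would consider the penalized generator
\[
g^{i,n}(t,y):=f^i(t,y)-n(y-U_t)^+
\]
and first verify that it still satisfies (H1)--(H4): the monotonicity (H1) survives because $y\mapsto -n(y-U_t)^+$ is non-increasing, and (H4) holds with the same dominating semimartingale $X$ provided by the hypotheses of Proposition \ref{stw.3.1} because $X\le U$ forces $(X-U)^+=0$, so the penalty contributes nothing to the integrability check. Proposition \ref{stw.2.1} then produces a solution $(Y^{i,n},M^{i,n},K^{i,n})$ of $\lrs(\xi^i,g^{i,n}+dV^i,L^i)$, and I would set
\[
A^{i,n}_t:=n\int_0^t (Y^{i,n}_r-U_r)^+\,dr.
\]

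Next, at each penalization level I would apply a comparison theorem for lower--reflected BSDEs (of the type given in \cite[Corollary 2.9]{klimsiak2014}) to the two penalized problems. The hypotheses $\xi^1\le\xi^2$, $dV^1\le dV^2$, $L^1\le L^2$ together with $g^{1,n}(t,y)\le g^{2,n}(t,y)$ (immediate from $f^1\le f^2$) yield $Y^{1,n}_t\le Y^{2,n}_t$ on $[0,T]$. This pointwise inequality gives $(Y^{1,n}_r-U_r)^+\le (Y^{2,n}_r-U_r)^+$, hence $dA^{1,n}\le dA^{2,n}$ as measures on $[0,T]$; equivalently, $A^{2,n}-A^{1,n}$ is non-decreasing for each $n$.

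Finally, I would pass to the limit $n\to\infty$. Standard penalization theory for doubly--reflected BSDEs -- adapted to the present general-filtration, $L^1$-data setting via the shift reduction used in the proof of Proposition \ref{stw.3.1} together with the convergence results of \cite{klimsiak2014,MT2016} -- should provide, at least along a subsequence, pointwise convergence $A^{i,n}_t\to A^i_t$ for every $t\in[0,T]$. Since a pointwise-in-$t$ limit of non-decreasing c\`adl\`ag processes is itself non-decreasing, the limit $A^2-A^1$ is non-decreasing, which is exactly $dA^1\le dA^2$. The main technical obstacle will be this last convergence step: classical penalization proofs for doubly-reflected BSDEs are typically written in an $L^2$/Brownian framework, and the bulk of the work will be to justify the required mode of convergence of $A^{i,n}$ to $A^i$ under our weaker assumptions, leaning on the techniques developed in \cite{klimsiak2014,MT2016}.
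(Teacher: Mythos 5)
Your overall strategy---penalize one barrier, compare at each penalization level, pass to the limit---is the same as the paper's, but you penalize the \emph{wrong} barrier, and this is where the genuine gap lies. The paper considers $\urs(\xi^i, f^i+n(L^i-\cdot)^+ +dV^i,U)$: it penalizes the \emph{lower} barrier and keeps the upper barrier as a true reflection. The approximating processes $A^{i,n}$ are then genuine (predictable, possibly jumping) reflection processes at $U$, their ordering $dA^{1,n}\le dA^{2,n}$ follows from \cite[Proposition 2.14]{klimsiak2014} after a localization by stopping times $\tau^n_k$ (needed because the generators are only monotone and the data merely integrable), and their monotone convergence $A^{i,n}_t\nearrow A^i_t$ is exactly the statement of \cite[Theorem 4.2]{klimsiak2014}. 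In your scheme the quantity you must control, namely the upper reflection process, appears as the penalization term $A^{i,n}=n\int_0^\cdot (Y^{i,n}_r-U_r)^+\,dr$ rather than as a reflection term. The symmetric version of \cite[Theorem 4.2]{klimsiak2014} (obtained via $Y\mapsto -Y$) gives you monotone convergence of the \emph{lower} reflection processes $K^{i,n}$ and of $Y^{i,n}$, but says nothing directly about the convergence of the penalization terms; extracting $A^{i,n}_t\to A^i_t$ from the equation would additionally require control of the martingale parts, which is precisely the kind of estimate that is unavailable off the shelf in the general-filtration, $L^1$ setting. You acknowledge this step as the "main technical obstacle" and defer it to "standard penalization theory", but that theory is $L^2$/Brownian and does not cover this case; moreover your claim of pointwise convergence $A^{i,n}_t\to A^i_t$ for \emph{every} $t$ is delicate here because each $A^{i,n}$ is absolutely continuous while $A^i$ may charge the (predictable) downward jump times of the c\`adl\`ag barrier $U$.

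The earlier parts of your argument are sound in spirit: the penalized generator does satisfy (H1)--(H4) with the same dominating semimartingale, and the deduction $dA^{1,n}\le dA^{2,n}$ from $Y^{1,n}\le Y^{2,n}$ is clean (arguably cleaner than comparing reflection processes). But to close the proof you should either (a) switch to the paper's penalization, where the limit theorem for the $A$-components is already proved in \cite{klimsiak2014}, or (b) supply a genuine proof of the convergence $dA^{i,n}\to dA^i$ (weak convergence of the measures would suffice to preserve the ordering, but even that must be established). As written, the proof is incomplete.
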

\begin{proof}
Without loss  of generality we may assume that $S\equiv 0$ (see the proof of Proposition \ref{stw.3.1} for details).
Let $(Y^{i,n},M^{i,n},A^{i,n})$ be a solution of $\urs(\xi^i, f^i+n(L^i-\cdot)^+ + dV^i,U)$ for $n\in\N$ and $i=1,2.$ For $k,n\in\N$ we set
\[
\begin{split}
\tau^n_k=\inf\Big\{t>0: \int_0^t \Big( &|f^1(r,Y^{1,n}_r)|+ n(L^1_r-Y^{1,n}_r)^+\\
& + |f^2(r,Y^{2,n}_r)|+ n(L^2_r-Y^{2,n}_r)^+\Big)\,dr > k\Big\} \wedge T.
\end{split}
\]
By Remark \ref{rem.2.1}(i) and \cite[Proposition 2.14]{klimsiak2014},
\[
dA_{\cdot\wedge\tau^n_k}^{1,n}\leq dA_{\cdot\wedge\tau^n_k}^{2,n},\qquad k,n\in\N.
\]
Letting $k\to\infty$ yields
\begin{equation}\label{eq.3.12}
dA^{1,n}\leq dA^{2,n},\qquad n\in\N.
\end{equation}
By \cite[Theorem 4.2]{klimsiak2014}, $A^{i,n}_t\nearrow A^i_t$ for $t\in[0,T]$ and $i=1,2.$ From this and \eqref{eq.3.12} it follows that $dA^1\leq dA^2.$
\end{proof}

\section{Systems of BSDEs with oblique reflection}
\label{sec3}

In what follows $\xi=(\xi^1,\ldots,\xi^d)$ is an $\F_T$-measurable random vector, $V=(V^1,\ldots,V^d)$ is an $\R^d$--valued finite variation process, $f:\Omega\times[0,T]\times\R^d\to\R^d$ is a measurable function such that for every $y\in\R^d$ the process $f(\cdot,y)$ is $\Fb$-progressively measurable. We adopt the following notation
\[
f^j(t,y;c)=f^j(t,y^1,y^2,\ldots,y^{j-1},c,y^{j+1},\ldots,y^d),\qquad y\in\R^d,\quad c\in\R.
\]
For $x,y\in\R^d$ we write $x\leq y$ if $x^j\leq y^j$ for $j=1,\ldots,d.$

The following assumptions will be needed in the rest of the paper. They were considered in  Klimsiak  \cite{Klimsiak2016}.
\begin{itemize}
\item[(A1)] $\xi^j\in L^1(\F_T)$ and $V^j\in\Vc_0$ for $j=1,\ldots,d$.
\item[(A2)] $f(t,\cdot)$ is on--diagonal decreasing for $t\in[0,T]$, i.e. for $j=1,\ldots,d$ we have
\[
f^j(t,y;c)\leq f^j(t,y;c'),\qquad c\geq c', \quad y\in\R^d.
\]
\item[(A3)] $f(t,\cdot)$ is off--diagonal increasing for $t\in[0,T]$, i.e. for $j=1,\ldots,d$ we have
\[
f^j(t,y)\leq f^j(t,y'),\qquad y\leq y',\quad y^j={y'}^{j}.
\]
\item[(A4)] $y\mapsto f(t,y)$ is continuous for every $t\in[0,T]$.
\item[(A5)] $\int_0^T |f^j(r,y)|\,dr<\infty$ for all $y\in\R^d$ and $j=1,\ldots,d$.
\end{itemize}

Let $U$ be an $\R^d$-valued \cadlag processes such that $\xi\leq U_T$.
\begin{defin}
We say that a triple $(Y,M,A)=\{(Y^j,M^j,A^j)\}_{j=1,\ldots, d}$ of adapted $\R^d$--valued \cadlag processes is a solution of the reflected BSDE with upper barrier $U$, generator $f+dV$ and terminal condition $\xi$ ($\urs(\xi,f+dV,U)$ for short) if
\begin{itemize}
\item[(i)] $Y^j$ is of class D, $A^j\in\Vp^+_0$, $M^j\in\M_{loc}$ with $M^j_0=0$ for $j=1,\ldots,d$,
\item[(ii)] $Y_t\leq U_t$ for $t\in[0,T],$
\item[(iii)] $\int_0^T (U^j_{r-}-Y^j_{r-})\,dA^j_r=0$ for $j=1,\ldots,d$,
\item[(iv)] $Y^j_t=\xi^j+\int_t^T f^j(r,Y_r)\,dr+ \int_t^TdV^j_r-\int_t^T dA^j_r - \int_t^T dM^j_r$ for all $t\in[0,T]$ and $j=1,\ldots,d$.
\end{itemize}
\end{defin}

Our motivations for considering reflected equations involving a
finite variation process $V$  comes from the theory of partial
differential equations with measure data. In these applications
$V$ is an additive functional of a Markov process in the Revuz
correspondence with some smooth measure (see
\cite{Klimsiak2016,kliroz,kliroz2015}).

\begin{defin}
We say that a triple $(\underline{Y},\underline{M},\underline{A})=\{(Y^j,M^j,A^j)\}_{j=1,\ldots, d}$ is a subsolution of ${\urs(\xi,f+dV,U)}$ if there exist $\underline{\xi}=(\underline{\xi}^1,\ldots,\underline{\xi}^d)$ and $\underline{V}=(\underline{V}^1,\ldots,\underline{V}^d)$ such that $\underline{\xi}^j\in L^1(\F_T),\underline{V}^j\in\Vc_0$ for $j=1,\ldots,d$, $\underline{\xi}\leq\xi$, $d\underline{V}\leq dV$ and $(\underline{Y},\underline{M},\underline{A})$ is a solution of $\urs(\underline{\xi},f+d\underline{V},U).$
\end{defin}

Apart from (A1)--(A5), we will also need the following assumption:
\begin{itemize}
\item[(A6)] There exists a subsolution $(\underline{Y},\underline{M},\underline{K})$ of $\urs(\xi,f+dV, U)$ such that 
\[
\sum_{j=1}^d E\int_0^T \left(|f^j(r,U_r;\underline{Y}^j_r)|+ |f^j(r,\underline{Y}_r)|\right)\,dr <\infty.
\]
\end{itemize}

\begin{rem}
Assume that  (A1)--(A5) are satisfied. We define  $\underline{f}=(\underline{f}^1,\underline{f}^2\ldots,\underline{f}^d)$  by
\[
\underline{f}^j(t,c)=\inf_{y\in\R^d} f^j(t,y;c),\qquad t\in[0,T],c\in\R, j=1,\ldots,d,
\]
and assume that $\underline{f}$ satisfies  (A4) and  (A5). If there exists a semimartingale $S\leq U$ which is a difference of supermartingales of class D and such that
\[
\sum_{j=1}^d E\Big( \int_0^T |f^j(r,U_r,S^j_r)| \,dr + \int_0^T |\underline{f}^j(r,S^j_r)|\,dr \Big)<\infty,
\]
then (A6) is satisfied with $(\underline{Y},\underline{M},\underline{A})
=\{(\underline{Y}^j,\underline{M}^j,\underline{A}^j)\}_{j=1,\ldots, d}$, where $(\underline{Y}^j,\underline{M}^j,\underline{A}^j)$ is a~solution of $\urs(\xi^j, \underline{f}^j+dV^j,U^j)$ for $j=1,\ldots,d$.
\end{rem}

\begin{rem}\label{rem.2.0}
Let $(\underline{Y},\underline{M},\underline{A})$ be a subsolution satisfying (A6). Then $\underline{Y}$ is a difference of supermartingales of class D. Indeed, since $E\int_0^T|f^j(r,\underline{Y}_r)|\,dr<\infty$ and
\[
\underline{Y}^j_t-\underline{Y}^j_0 + \int_0^t f^j(r,\underline{Y}_r)\,dr + \underline{V}^j_t = \underline{M}^j_t+\underline{A}^j_t,\qquad t\in[0,T]
\]
for $j=1,\ldots,d$, the process $\underline{M}+\underline{A}$ is a submartingale of class D. Therefore, by the Doob-Meyer decomposition theorem (see \cite[Theorem 2]{Meyer1962} or \cite[Theorem III.11]{Protter}), $\underline{M}$ is a uniformly integrable martingale, and, as a consequence, $E\overline{A}_T<\infty$.
Write $\underline{C}_t=-\int_0^t f(r,\underline{Y}_r)\,dr - \underline{V}_t + \underline{A}_t$. Then $\underline{Y}_t=\underline{Y}_0 + \underline{C}_t + \underline{M}_t$, $t\in[0,T]$, and $\underline{C}\in\Vc_0.$
Let $\underline{C}=\underline{C}^+-\underline{C}^-$ be the Jordan decomposition of $\underline{C}$. Then
$\underline{C}^+,\underline{C}^-$ are increasing processes such that $\underline{C}^+,\underline{C}^-\in\Vc_0$. Therefore $S^1=\underline{M}-\underline{C}^-$ and $S^2=\underline{Y}_0-\underline{C}^+$ are supermartingales of class D. Of course, $\underline{Y}=S^1-S^2$.
\end{rem}

Let $H=(H^1,H^2,\ldots,H^d):[0,T]\times\R^d\to\R^d.$ We  will use
the notation $H_t(x)=H(t,x)$, $H^j_t(x)=H^j(t,x)$ and we adopt the
convention that for a given stochastic process $X$,
$H_{t-}(X_{t-})=\lim_{s\nearrow t} H_s(X_s)$.

In what follows we assume that $H_T(\xi)\leq\xi\leq U_T$.

\begin{defin}
We say that a quadruple
$(Y,M,K,A)=\{(Y^j,M^j,K^j,A^j)\}_{j=1,\ldots, d}$ of adapted
$\R^d$--valued \cadlag processes is a solution of a system of
BSDEs with generator $f$, terminal condition $\xi$, oblique
reflection driven by a function $H$ and upper barrier $U$ (we
write $\ors(\xi,f+dV,H,U)$ for short) if
\begin{itemize}
\item[(i)] $Y^j$ is of class D, $K^j,A^j\in\Vp_0^+$, $M^j\in\M_{loc}$ with $M^j_0=0$ for $j=1,\ldots,d$,
\item[(ii)] $H_t(Y_t)\leq Y_t\leq U_t$ for $t\in[0,T]$,
\item[(iii)] $\int_0^T (Y^j_{r-}-H^j_{r-}(Y_{r-}))\,dK^j_t=\int_0^T (U^j_{r-}-Y^j_{r-})\,dA^j_r=0$ for $j=1,\ldots,d$,
\item[(iv)] $Y^j_t=\xi^j+\int_t^T f^j(r,Y_r)\,dr+\int_t^TdV^j_r+\int_t^T dK^j_r- \int_t^T dA^j_r - \int_t^T dM^j_r$ for all $t\in[0,T]$ and $j=1,\ldots,d$.
\end{itemize}
\end{defin}
\begin{rem}\label{rem.3.2}
If $(Y,M,K,A)$ is a solution of $\ors(\xi,f+dV,H,U)$, then for each $j=1,\ldots,d$ the quadruple $(Y^j,M^j,K^j,A^j)$ is a solution of the one--dimensional equation $\rs(\xi^j,f^j(\cdot,Y;\cdot)+dV^j,H^j(Y),U^j)$.
\end{rem}

To prove the existence of a solution we will also need the
following assumption.
\begin{itemize}
\item[(A7)] $(t,y)\mapsto H_t(y)$ is continuous and $H(t,y)\leq H(t,y')$ for all $t\in[0,T]$ and $y,y'\in\R^d$, $y\leq y'.$
\end{itemize}

Note that from (A6) it follows in particular that the process $H(Y)$ is c\`adl\`ag.

\begin{ex}
An important example of $H$ is given by \eqref{eq.1.2}. It is easy to check that such $H$ satisfies (A6) (see \cite{HamZhang2010,HuTang2010}).
\end{ex}

To establish uniqueness, some additional assumptions on  $\{c^{j,k}\}$ will be needed  (e.g. $c^{i,j}+c^{j,k}> c^{i,j}$ if $i,j,k$ are all  different).

\begin{rem}\label{rem.3.3}
In \cite{TangZhongKoo2013} the problem  of existence of solutions
of \eqref{eq.1.1} is considered in two cases: when $U$ is a
pointwise limit of some decreasing sequence of It\^o processes
(see \cite[Definition 2.4]{TangZhongKoo2013}), and when $H$
satisfies \eqref{eq.1.2} with some constant functions
$\{c^{j,k}\}_{j,k=1,\ldots, d}$ such that $c^{j,j}=0$ for
$j=1,\ldots,d$ and $c^{j,k}>0$ for $j\neq k.$ The first assumption
on $U$  seems to be too restrictive and, in general, may be
difficult to verify. The second one is the special case of the
situation when the  processes $H^j(U)$ and $U^j$ are completely
separated for $j=1,\ldots,d$. Reflected BSDEs with usual
reflection, general filtration and completely separated barriers
are investigated in \cite{MT2016}. It is shown there that if the
barriers are locally separated, then they  locally satisfy a
Mokobodzki-type  condition (see \cite[Lemma 3.2]{MT2016}).
\end{rem}

The following example shows that for the existence of a solution
to systems of equations with upper barrier and oblique reflection
from below it is natural to assume that $H_t(U_t)\leq U_t$ for
$t\in[0,T]$.
\begin{ex}
Let $d=2$, $T=2$ and
\[
H_t(y^1,y^2)=(y^2-1,y^1-1),\qquad t\in[0,2],\; y^1,y^2\in\R.
\]
Let $f\equiv 0$, $V\equiv 0$, $U_t=(2,t)$ for $t\in[0,2]$ and
$\xi=(2,2)$. Then $\ors(\xi,f,H,U)$ has no solution. Indeed,
suppose that $(Y,M,K,A)$ is a solution of $\ors(\xi,f,H,U)$. Then,
by Remark \ref{rem.3.2}, $(Y^1,M^1,K^1,A^1)$ is a solution of
$\rs(\xi^1,0,Y^2-1, 2)$ and $(Y^2,M^2,K^2,A^2)$ is a solution of
$\rs(\xi^2,0,Y^1-1, U^2)$. Since $Y^2_t-1\leq U^2_t-1<2$ for
$t\in[0,2]$, we see that $(2,0,0,0)$ is a solution of
$\rs(\xi^1,0,Y^2-1, 2)$. By uniqueness (see \cite[Corollary
3.2]{klimsiak2014}), $(2,0,0,0)=(Y^1,M^1,K^1,A^1).$ Therefore
$1=H^2_t(Y_t)\leq Y^2_t \leq U^2_t=t$, $t\in[0,2]$, which is a
contradiction.
\end{ex}

In the present paper we will need the following version of  the
Mokobodzki condition.

\begin{itemize}
\item[(M)] There exists a semimartingale $X$ being a
difference of two supermartingales of class D such that
$H(U)\leq X\leq U$.
\end{itemize}

\begin{tw}\label{tw.3}
Assume that \emph{(A1) --- (A7), (M)} are satisfied. Then there exists a solution $(Y,M,K,A)$ of $\ors(\xi,f+dV,H,U)$.
\end{tw}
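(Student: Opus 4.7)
My plan is a monotone Picard iteration that, at each step, solves coordinate-wise a one-dimensional doubly reflected BSDE by invoking Proposition~\ref{stw.3.1} and Lemma~\ref{lm.3.3}. Starting from the subsolution $\underline{Y}$ furnished by (A6), I set $Y^0=\underline{Y}$ and, given $Y^n$, define $(Y^{n+1,j},M^{n+1,j},K^{n+1,j},A^{n+1,j})$ for $j=1,\dots,d$ as the solution of
\[
\rs(\xi^j,\;f^j(\cdot,Y^n;\cdot)+dV^j,\;H^j(Y^n),\;U^j).
\]
To invoke Proposition~\ref{stw.3.1} at each stage I check (H1)--(H4) for the frozen generator $c\mapsto f^j(r,Y^n_r;c)$: (H1) reduces to the on-diagonal monotonicity (A2); (H3) to continuity (A4); (H2), (H4) and the $L^1$-integrability come from (A1), (A5) together with the envelope bounds supplied by (A6) once we know $\underline{Y}\le Y^n\le U$. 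The Mokobodzki separation between the barriers $H^j(Y^n)\le U^j$ holds thanks to (M) and the monotonicity in (A7), since $H(Y^n)\le H(U)\le X\le U$ for the semimartingale $X$ of (M); the same $X$ separates the barriers at every stage.

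Next I prove by induction the monotonicity $\underline{Y}\le Y^n\le Y^{n+1}\le U$. The upper envelope is built into the reflected equation. For $\underline{Y}\le Y^1$, I compare $\underline{Y}$, viewed as a one-dimensional $\urs$ solution, with $Y^1$: the upper increasing processes are aligned via Lemma~\ref{lm.3.3}, and since the lower oblique reflection at $H^j(\underline{Y})$ can only push $Y^1$ above $\underline{Y}$, the standard comparison for $\rs$ from \cite[Theorem~4.2]{klimsiak2014} yields the claim. The inductive step $Y^n\le Y^{n+1}\Rightarrow Y^{n+1}\le Y^{n+2}$ uses (A3) to dominate the frozen generators and (A7) to dominate the frozen lower barriers, and the same comparison applies. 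Hence $Y^n\uparrow Y$ pointwise with $Y$ of class~D, since it is sandwiched between $\underline{Y}$ (a difference of supermartingales of class~D by Remark~\ref{rem.2.0}) and $U$.

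The main obstacle is the passage to the limit in
\begin{multline*}
Y^{n+1,j}_t = \xi^j+\int_t^T f^j(r,Y^n_r;Y^{n+1,j}_r)\,dr+\int_t^T dV^j_r\\
+\int_t^T dK^{n+1,j}_r-\int_t^T dA^{n+1,j}_r-\int_t^T dM^{n+1,j}_r.
\end{multline*}
Combining (A2), (A3) with the envelope $\underline{Y}\le Y^n\le U$ and (A6) yields uniform $L^1$ bounds on $f^j(r,Y^n_r)$; the Doob--Meyer decomposition of the class~D semimartingale $Y^{n,j}+\int_0^{\cdot}f^j(r,Y^n)\,dr+V^j$ then bounds the variation of $K^{n,j}-A^{n,j}$ uniformly in $n$. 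Lemma~\ref{lm.3.3} applied recursively to the iteration shows that the measures $dA^{n,j}$ are monotone increasing in $n$, hence $A^{n,j}\uparrow A^j\in\Vp_0^+$. Coupled with the pointwise convergence $Y^n\to Y$ and dominated convergence for $\int_0^{\cdot}f^j(r,Y^n_r)\,dr$ (via (A4) and the envelope), this identifies the limit $K^j\in\Vp_0^+$ and, through uniqueness of the Doob--Meyer decomposition on a localizing sequence of stopping times, the local martingale $M^j$.

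Finally I verify the Skorokhod conditions in the limit. The upper identity $\int_0^T(U^j_{r-}-Y^j_{r-})\,dA^j_r=0$ is obtained by passing to the limit in $\int_0^T(U^j_{r-}-Y^{n+1,j}_{r-})\,dA^{n+1,j}_r=0$ using $Y^n\uparrow Y$ and the monotone convergence $dA^{n,j}\uparrow dA^j$. The lower identity $\int_0^T(Y^j_{r-}-H^j_{r-}(Y_{r-}))\,dK^j_r=0$ is the most delicate point, because $H^j$ depends on the unknown $Y$; the argument exploits continuity in (A7) together with the joint convergence $Y^n,Y^{n+1}\to Y$ to pass to the limit in the level-$n+1$ identity $\int_0^T(Y^{n+1,j}_{r-}-H^j_{r-}(Y^n_{r-}))\,dK^{n+1,j}_r=0$. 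Compounded with the need to handle a general filtration and $L^1$ data, this is the step where the argument becomes substantially more involved than in the Brownian $L^2$ framework of \cite{TangZhongKoo2013}.
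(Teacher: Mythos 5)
Your overall strategy---Picard iteration on the coordinatewise doubly reflected equations via Proposition~\ref{stw.3.1}, monotonicity of $\{Y^{(n)}\}$ by comparison, and passage to the limit---is the same as the paper's, and Steps~1 (setup, barrier separation via (A7) and (M), monotone convergence $Y^{(n)}\uparrow Y\leq U$) and the limit of the upper Skorokhod condition are essentially correct. But there are two genuine gaps at exactly the points where the paper has to work hardest.

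First, the uniform bound on the upper reflecting processes. You assert that the Doob--Meyer decomposition of the class~D semimartingale $Y^{(n),j}+\int_0^\cdot f^j(r,Y^{(n)}_r)\,dr+V^j$ ``bounds the variation of $K^{(n),j}-A^{(n),j}$ uniformly in $n$.'' It does not: the canonical decomposition is unique, but being sandwiched between two fixed class~D processes puts no uniform bound on the variation of the compensator. Consequently, knowing from Lemma~\ref{lm.3.3} that $dA^{(n),j}$ is increasing in $n$ does not by itself give $A^j_T=\lim_n A^{(n),j}_T<\infty$; you need $\sup_n EA^{(n),j}_T<\infty$. The paper obtains this by a separate construction: starting from the semimartingale $X$ of (M), it builds (via double penalization and \cite[Theorem 2.8]{Klimsiak2016}, \cite[Theorem 2.13]{klimsiak2014}) a solution $(\overline X^j,\overline N^j,\overline D^j)$ of an upper-reflected BSDE whose generator dominates all the frozen generators, with $E\overline D^j_T<\infty$, and then shows $dA^{(n),j}\leq d\overline D^j$ for every $n$ by \cite[Proposition 2.14]{klimsiak2014}. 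This comparison of reflecting measures against a fixed dominating process is the missing ingredient; it also requires the auxiliary integrability condition \eqref{eq.3.MC}, which the paper subsequently removes by localization --- another step absent from your sketch. Second, the minimality of $K$. You propose to pass to the limit directly in $\int_0^T(Y^{(n+1),j}_{r-}-H^j_{r-}(Y^{(n)}_{r-}))\,dK^{(n+1),j}_r=0$, but nothing in your argument (or the paper's) establishes convergence of the measures $dK^{(n),j}$ --- unlike $dA^{(n),j}$ they are not monotone in $n$ and carry no uniform bound, so this limit cannot be taken as stated. The paper instead derives a Snell-envelope representation \eqref{eq.snell} for the limit $Y^j$ on $[0,\tau_k]$ by passing to the limit in the optimal-stopping representations of $Y^{(n),j}$ (Remark~\ref{rem.2.1}), and then invokes Remark~\ref{rem.2.2} plus uniqueness of the Doob--Meyer decomposition to identify $K^j$ with the reflecting process of a genuine $\lrs$ and so read off $\int_0^{\tau_k}(Y^j_{r-}-H^j_{r-}(Y_{r-}))\,dK^j_r=0$. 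Without some substitute for these two arguments, the proposal does not close.
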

\begin{proof}
Let $(\underline{Y},\underline{M},\underline{A})$ be  the
subsolution appearing in assumption (A6). We begin with showing
that if $\tilde{Y}$ is  a \cadlag process such that
$\underline{Y}_t\leq \tilde{Y}_t\leq U_t$ for $t\in[0,T]$, then
the semimartingale $X:=\underline{Y}^j$ and $\xi^j, V^j,\tilde {f}^j$, where  $\tilde{f}^j$ 
is defined by
$\tilde{f}^j(t,c)=f^j(t,\tilde{Y}_t;c)$, $t\in[0,T]$, $c\in\R$,
satisfy the assumptions of Proposition \ref{stw.3.1} for
$j=1,\ldots,d$. Obviously $c\mapsto\tilde{f}^j(t,c)$ is
decreasing. Since $\tilde{Y}$ has \cadlag paths, the random
variables $\underline{\tilde{Y}}=\inf_{t\in[0,T]}\tilde{Y}_t$ and
$\overline{\tilde{Y}}=\sup_{t\in[0,T]}\tilde{Y}_t$ are finite. By
(A3),
\begin{equation*}
f^j(t,\underline{\tilde{Y}};c)\leq \tilde{f}^j(t,c)\leq f^j(t,\overline{\tilde{Y}};c),\qquad t\in[0,T],\; c\in\R,
\end{equation*}
so by (A5), $\int_0^T|\tilde{f}^j(r,c)|\,dr<\infty$. By (A3), we also have
\[
f^j(t,\underline{Y}_t)\leq \tilde{f}^j(t,\underline{Y}^j_t)\leq f^j(t,U_t;\underline{Y}^j_t),\qquad t\in[0,T].
\]
Therefore, by (A6), $E\int_0^T|\tilde{f}^j(r,\underline{Y}^j_r)|\,dr<\infty$. Moreover, by Remark \ref{rem.2.0}, $\underline{Y}^j$ is a difference of supermartingales of class D, so \eqref{eq.3.1} is satisfied with $S=\underline{Y}^j$. The rest of the proof we divide into three steps.

{\bf Step 1.} 
Let $(Y^{(0)},M^{(0)},K^{(0)},A^{(0)})=(\underline{Y},\underline{M},0,\underline{A})$.
For $n\in\N$ we set
\[
(Y^{(n)},M^{(n)},K^{(n)},A^{(n)})=\{(Y^{(n),j},M^{(n),j},K^{(n),j},A^{(n),j})\}_{j=1,\ldots, d},
\]
where $(Y^{(n),j},M^{(n),j},K^{(n),j},A^{(n),j})$ is a solution of $\rs(\xi^j, f^j(\cdot,Y^{(n-1)};\cdot)+dV^j,$\break $H^j(Y^{(n-1)}),U^j)$. The solutions $(Y^{(n),j},M^{(n),j},K^{(n),j},A^{(n),j})$ exist by Proposition \ref{stw.3.1}, because we already know that \eqref{eq.3.1} is satisfied, and moreover, by (A7) and (M),
\[
H^j(Y^{(n-1)})\leq H^j(U)\leq X^j\leq U^j.
\]
By \cite[Proposition 2.1]{klimsiak2014}, $Y^{(1)}_t\geq Y^{(0)}_t$ for $t\in[0,T]$ (we consider $Y^{(1),j}$ as the first component of the solution of $\urs(\xi^j, f^j(\cdot,Y^{(0)};\cdot)+dV^j+dK^{(1),j},U^j)$). Suppose that for some $n\in\N$, $Y^{(n+1)}_t\geq Y^{(n)}_t$ for every $t\in[0,T]$. Then, by (A3) and (A7), $f^j(t,Y^{(n+1)}_t,y)\geq f^j(t,Y^{(n)}_t,y)$ and $H_t(Y^{(n+1)}_t)\geq H_t(Y^{(n)}_t)$ for $y\in\R,$ $t\in[0,T]$, so  by \cite[Proposition 3.1]{klimsiak2014}, $Y^{(n+2)}_t\geq Y^{(n+1)}_t$ for $t\in[0,T].$ Thus the sequence $\{Y^{(n)}\}$ is increasing. Since it is  also bounded from above by $U$, we  can define a process $Y=(Y^1,Y^2,\ldots,Y^d)$ by
\begin{equation}\label{eq.3.0}
Y^j_t=\lim_{n\to\infty}Y^{{(n)},j}_t=\sup_{n\in\N}Y^{{(n)},j}_t,\qquad t\in[0,T],\quad j=1,\ldots,d.
\end{equation}
Note that by (A7), $H^j(Y)\leq Y^j\leq U^j$ since $H^j(Y^{(n-1)})\leq Y^{(n),j}\leq U^j$ for every $n\in\N$.

{\bf Step 2.} We show that $Y=(Y^1,\dots,Y^d)$ is a \cadlag semimartingale of class D. To this end, we first assume additionally that
\begin{equation}\label{eq.3.MC}
\sum_{j=1}^dE\int_0^T |f^j(r,U_r;X^j_r)|\,dr <\infty.
\end{equation}
Fix $j$. By   \cite[Theorem 2.8]{Klimsiak2016}, for $n\in\N$ and
$p,q\in\N$ there exists a  process $Y^{(n),j,p,q}$  of class D and
a local martingale $M^{(n),j,p,q}$  such that $P$-a.s. we have
\begin{equation}\label{eq.3.4}
\begin{split}
Y^{(n),j,p,q}_t&=\xi^j + \int_t^T f^j(r, Y^{(n-1)}_r; Y^{(n),j,p,q}_r)\, dr\\
&\quad + \int_t^T dV^j_r + \int_t^T p(H^j_r(Y^{(n-1)}_r)-Y^{(n),j,p,q}_r)^+\,dr\\
&\quad - \int_t^Tq (Y^{(n),j,p,q}_r-U^j_r)^+\,ds - \int_t^T dM^{(n),j,p,q}_r,\quad t\in[0,T].
\end{split}
\end{equation}
By Remark \ref{rem.3.1}, there exists $C^j=C^{j,+}-C^{j,-}\in\V^1_0$ and $N^j\in\M$ with $N^j_0=0$
such that $X^j_t=X^j_0+C^j_t+N^j_t$. Since $X^j\leq U^j$, we have
\begin{align*}
X^j_t&=X^j_T+\int_t^T f^j(r,U_r;X^j_r)\,dr + \int_t^T f^{j,-}(r,U_r;X^j_r)\,dr+ \int_t^T dC^{j,-}_r\\
&\quad - \int_t^T f^{j,+}(r,U_r;X^j_r)\,dr - \int_t^T dC^{j,+}_r - \int_t^T q(X^j_r-U^j_r)^+\,dr - \int_t^T dN^j_r.
\end{align*}
Let $V^j=V^{j,+}-V^{j,-}$ be the Jordan decomposition of $V^j.$ By \cite[Theorem 2.7]{klimsiak2014} there exists a solution $(\overline{X}^{j,q},\overline{N}^{j,q})$ of the BSDE
\begin{align*}
\overline{X}^{j,q}_t&=X^j_T\vee\xi^j+\int_t^T f^j(r,U_r;\overline{X}^{j,q}_r)\,dr + \int_t^T f^{j,-}(r,U_r;X^j_r)\,dr\\
&\quad + \int_t^T dV^{j,+}_r + \int_t^T dC^{j,-}_r - \int_t^T q(\overline{X}^{j,q}_r-U^j_r)^+\,dr - \int_t^T d\overline{N}^{j,q}_r.
\end{align*}
By \cite[Proposition 2.1]{kliroz}, $\overline{X}^{j,q}\geq X^j,$ so $\overline{X}^{j,q}\geq H^j(Y^{(n-1)})$. Therefore the above equation may be rewritten in the form
\begin{equation}\label{eq.3.5}
\begin{split}
\overline{X}^{j,q}_t&=X^j_T\vee\xi^j+\int_t^T f^j(r,U_r;\overline{X}^{j,q}_r)\,dr
 + \int_t^T f^{j,-}(r,U_r;X^j_r)\,dr \\
 &\quad+ \int_t^T dV^{j,+}_r  + \int_t^T dC^{j,-}_r
+ \int_t^T p(H^j_r(Y^{(n-1)}_r)-\overline{X}^{j,q}_r)^+\,dr\\
&\quad - \int_t^T q(\overline{X}^{j,q}_r-U^j_r)^+\,dr - \int_t^T d\overline{N}^{j,q}_r.
\end{split}
\end{equation}
From (\ref{eq.3.4}), (\ref{eq.3.5}) and  \cite[Proposition 2.1]{kliroz} it follows that
\begin{equation}\label{eq.szac1}
Y^{(n),j,p,q}_t\leq \overline{X}^{j,q}_t,\qquad t\in[0,T].
\end{equation}
Let $\tilde{f}^j=f^j(\cdot,U;\cdot)+f^{j,-}(\cdot,U;X^j)+ p(H^j(Y^{(n-1)})-\cdot)^+$. By \cite[Theorem 2.13]{klimsiak2014} there exists a unique solution  of $(\overline{X}^j,\overline{N}^j,\overline{D}^j)$ of $\urs(X^j_T\vee\xi^j, \tilde{f}^j+dV^{j,+}+dC^{j,-}, U^j)$ such that
\begin{equation}\label{eq.3.6}
E\overline{D}^j_T <\infty.
\end{equation}
Letting $q\rightarrow\infty$ in \eqref{eq.3.5} and applying
\cite[Theorem 2.13]{klimsiak2014} shows that
\begin{equation}\label{eq.3.05}
\overline{X}^{j,q}\searrow\overline{X}^{j}.
\end{equation}
Similarly, letting $q\rightarrow\infty$ in \eqref{eq.3.4} and applying
\cite[Theorem 2.13]{klimsiak2014} shows that
\begin{equation}\label{eq.3.051}
Y^{(n),j,p,q}\searrow Y^{(n),j,p},
\end{equation}
where $(Y^{(n),j,p},M^{(n),j,p},A^{(n),j,p})$ is a solution of $\urs(\xi^j, \overline{f}+dV^j, U^j)$ with $\overline{f}^j=f^j(\cdot,Y^{(n-1)},\cdot)+ p(H^j(Y^{(n-1)})-\cdot)^+$. Note that $\tilde{f}^j\geq \overline{f}^j$ and $dV^{j,+}+dC^{j,-}\geq dV^j$.
By \cite[Proposition 2.14]{klimsiak2014} applied to $(\overline{X}^j,\overline{N}^j,\overline{D}^j)$ and $(Y^{(n),j,p},M^{(n),j,p},A^{(n),j,p})$, we have
\begin{equation}\label{eq.3.061}
dA^{(n),j,p}\leq d\overline{D}^j.
\end{equation}
By \cite[Theorem 3.3]{klimsiak2014},
\begin{equation}\label{eq.3.06}
Y^{(n),j,p}_t\nearrow Y^{(n),j}_t,\qquad A^{(n),j,p}_t\nearrow A^{(n),j}_t,\quad t\in[0,T]
\end{equation}
as $p\rightarrow\infty$. By \eqref{eq.3.061} and the second convergence in \eqref{eq.3.06},
\begin{equation}\label{eq.3.7}
dA^{(n),j}\leq d\overline{D}^j.
\end{equation}
Moreover, by Lemma \ref{lm.3.3}, $dA^{(n),j}\leq dA^{(n+1),j}$ for $n\in\N$. Set $A^j_t= \lim_{n\to\infty} A^{(n),j}_t$, $t\in[0,T]$. Then $A^j\in\Vp_0^+$ and, by \eqref{eq.3.6} and \eqref{eq.3.7}, $EA^j_T<\infty$.  Furthermore, $dA^{(n),j}\le dA^j $ for $n\in\N$, so
\begin{equation}\label{eq.3.8}
\sup_{t\in[0,T]}|A^{(n),j}_t - A^j_t| = A^j_T-A^{(n),j}_T \to 0\quad P\mbox-{a.s.}
\end{equation}
By \eqref{eq.szac1}, \eqref{eq.3.05} and \eqref{eq.3.051}, $Y^{(n),j,p}\leq\overline{X}^j$ for every $p\in\N$. Therefore, by the first convergence in \eqref{eq.3.06},
\[
Y^{(n),j}\leq \overline{X}^j.
\]
Moreover, by (A2) and  (A3),
\begin{equation}\label{eq.3.13}
f^j(t, Y^{(n-1)}_t; U^{j}_t)\leq f^j(t, Y^{(n-1)}_t; Y^{(n),j}_t)\leq f^j(t, U_t; Y^{(n),j}_t),\quad t\in[0,T].
\end{equation}
By (A3) and (A4), the left-hand side of the  first inequality above increases to  $f^j(r, Y_r; U^{j}_r)$ as $n\rightarrow\infty$, whereas by (A2) and (A4) the right-hand side of the  second inequality  decreases to $f^j(r, U_r; Y^{j}_r)$. Moreover, by (A2) and (A3),
\[
\begin{split}
|f^j(t, Y^{(n-1)}_t; U^{j}_t)|&\leq |f^j(t,\overline{Y};\underline{U}^j)|+|f^j(t,\underline{Y}^{(0)};\overline{U}^j)|,\\
|f^j(t, U_t; Y^{(n),j}_t)| &\leq |f^j(t,\overline{U};\underline{Y}^{(0),j})|+|f^j(t,\underline{U};\overline{Y}^j)|,
\end{split}
\]
where $\overline{U}^j=\sup_{t\in[0,T]}U^j_t,$ $\underline{U}^j=\inf_{t\in[0,T]}U^j_t$,  $\underline{Y}^{(0),j}=\inf_{t\in[0,T]}Y^{(0),j}_t$ and $\overline{U}=(\overline{U}^1,\ldots,\overline{U}^d)$, $\underline{U}=(\underline{U}^1,\ldots,\underline{U}^d),$ and $\underline{Y}^{(0)}=(\underline{Y}^{(0),1},\ldots,\underline{Y}^{(0),d})$. Therefore, by (A5) and the dominated convergence theorem, $P$-a.s. we have
\[
\begin{split}
\int_0^T |f^j(r, Y^{(n-1)}_r; U^{j}_r)-f^j(r, Y_r; U^{j}_r)|\,dr&\to 0,\\
\int_0^T |f^j(r, U_r; Y^{(n),j}_r)-f^j(r, U_r; Y^{j}_r)|\,dr&\to 0
\end{split}
\]
as $n\rightarrow\infty$. From this and (\ref{eq.3.13}) it follows that $P$-a.s. the sequence $\{f^j(\cdot, Y^{(n-1)}; Y^{(n),j})\}$ is uniformly integrable on $[0,T]$. By \eqref{eq.3.0} and (A4),
\[
\int_0^T|f^j(r, Y^{(n-1)}_r; Y^{(n),j}_r) -f^j(r, Y_r)|\,dr \to 0\quad P\mbox{-a.s.}
\]
as $n\rightarrow\infty$. In particular, $\sup_{t\in[0,T]} \big| \int_0^tf^j(r, Y^{(n-1)}_r; Y^{(n),j}_r)\, dr - \int_0^t f^j(r, Y_r)\,dr\big|\rightarrow0$ $P$-a.s.  as $n\rightarrow\infty$.
Since $dA^{(n),j}\leq dA^j$, $|A^{(n),j}|_t\leq |A^j|_t$ for $t\in[0,T]$. Moreover, by \eqref{eq.3.13}, (A2) and (A3),
\[
\int_0^t|f^j(r, Y^{(n-1)}_r; Y^{(n),j}_r)|\, dr\leq \int_0^t \left(|f^j(r, Y^{(0)}_r; U^{j}_r)|+|f^j(r, U_r; Y^{(0),j}_r)|\right)\,dr
\]
for $t\in[0,T]$. Hence
\[
\begin{split}
\Big| \int_0^\cdot f^j(r, Y^{(n-1)}_r&; Y^{(n),j}_r)\, dr + \int_0^\cdot d(V^j-A^{(n),j})_r\Big|_t\\
& \leq \int_0^t \Big(|f^j(r, Y^{(0)}_r; U^{j}_r)| + |f^j(r, U_r; Y^{(0),j}_r)|\Big)\,dr + |V|^j_t+A^{j}_t
\end{split}
\]
for $t\in[0,T]$. Since the right--hand side of the above inequality is a \cadlag process, there exists a~sequence of stopping times $\{\tau_k\}$ such that for each $k\in\N$,
\[
\sup_{n\in\N}E\Big| \int_0^\cdot f^j(r, Y^{(n-1)}_r; Y^{(n),j}_r)\, dr + \int_0^\cdot d(V^j-A^{(n),j})_r\Big|_{\tau_k\wedge T}^2 < \infty.
\]
We have shown that for $j=1,\ldots,d$ the sequence $\{Y^{(n),j}\}$ satisfies the assumptions of \cite[Proposition 3.10]{Klimsiak2016} with the increasing processes $K^{(n),j}$ and the finite variation processes $\int_0^\cdot f^j(r, Y^{(n-1)}_r; Y^{(n),j}_r)\, dr + \int_0^\cdot d(V^j-A^{(n),j})_r$. Therefore, for each $j$  there exists a local martingale $M^j\in\M_{loc}$ with $M^j_0=0$ and $K^j\in\Vp_0^+$ such that
\begin{equation}\label{eq.3.091}
Y^j_t=\xi^j+\int_t^T f^j(r,Y_r)\,dr +\int_t^TdV^j_r + \int_t^T dK^j_r - \int_t^T dA^j_r - \int_t^T dM^{j}_r.
\end{equation}

Now we show how to dispense with  condition \eqref{eq.3.MC} from
the proof of \eqref{eq.3.091}. For $k\in\N$, set
\[
\sigma_k=\inf\Big\{t>0 : \sum_{j=1}^d \int_0^t |f^j(r,U_r,X^j_r)|\,dr > k \Big\}\wedge T.
\]
Observe that the sequence $\{\sigma_k\}$ is increasing  and is
a chain, i.e.
\[
P\big(\liminf_{k\to\infty}\{\sigma_k=T\}\big)=1.
\]
Moreover, \eqref{eq.3.MC} is satisfied on
$[0,\sigma_k]$, i.e.
\[
\sum_{j=1}^dE\int_0^{\sigma_k} |f^j(r,U_r;X^j_r)|\,dr <\infty.
\]
Repeating step by step the arguments from  proof of
\eqref{eq.3.091}, one can show that for $j=1,\ldots,d$ there exist
$A^{(k),j}, K^{(k),j}\in\Vp_0^+$ and $M^{(k),j}\in\M_{loc}$ with
$M^{(k),j}_0=0$ such that
\[
Y^j_{t\wedge\sigma_k}=Y^j_0 - \int_0^{t\wedge\sigma_k}f^j(r,Y_r)\,dr - V^j_{t\wedge\sigma_k} - K^{(k),j}_{t\wedge\sigma_k} + A^{(k),j}_{t\wedge\sigma_k} + M^{(k),j}_{t\wedge\sigma_k},\quad t\in[0,T]
\]
By  uniqueness of the Doob-Meyer decomposition,
\[
K^{(k+1),j}_{t\wedge\sigma_k} = K^{(k),j}_{t\wedge\sigma_k},\quad A^{(k+1),j}_{t\wedge\sigma_k}=A^{(k),j}_{t\wedge\sigma_k},\quad M^{(k+1),j}_{t\wedge\sigma_k} = M^{(k),j}_{t\wedge\sigma_k},\quad t\in[0,T].
\]
for $j=1,\ldots,d$. Therefore we can put $K^j_t=K^{(k),j}_t$, $A^j_t=A^{(k),j}_t$ for $t\in[0,\sigma_k]$ and $M^j_t=\sum_{k=1}^\infty\int_{\sigma_{k-1}\wedge t}^{\sigma_k\wedge t} dM^{(k),j}_r$, and then
\[
Y^j_t = Y^j_0 + \int_0^t f^j(r,Y^j_r)\,dr - V^j_t - K^j_t + A^j_t + M^j_t,\quad t\in[0,T],
\]
which is equivalent to \eqref{eq.3.091}.

{\bf Step 3.} We show that $A,K$ satisfy the minimality conditions. We have
\begin{equation}\label{eq.3.09}
\begin{split}
\int_0^T (U^j_{r-}-Y^{j}_{r-})\,dA^{j}_r 
&= \int_0^T (U^j_{r-}-Y^{j}_{r-})\,d(A^j-A^{(n),j})_r\\
&\quad + \int_0^T (U^j_{r-}-Y^{(n),j}_{r-})\,dA^{(n),j}_r + \int_0^T (Y^{(n),j}_{r-}-Y^{j}_{r-})\,dA^{(n),j}_r.
\end{split}
\end{equation}
By \eqref{eq.3.8},
\begin{equation}\label{eq.3.9}
\lim_{n\to\infty}\int_0^T (U^j_{r-}-Y^{j}_{r-})\,d(A^j-A^{(n),j})_r=0,
\end{equation}
and by the definition of a solution of RBSDE, for every $n\in\N$,
\begin{equation}\label{eq.3.10}
\int_0^T (U^j_{r-}-Y^{(n),j}_{r-})\,dA^{(n),j}_r=0.
\end{equation}
Moreover, since $Y^{(n),j},Y^{j}$ are \cadlag processes, from  \eqref{eq.3.0} it follows that  for every $n\in\N$,
\begin{equation}
\label{eq.3.11}
\int_0^T (Y^{(n),j}_{r-}-Y^{j}_{r-})\,dA^{(n),j}_r\leq 0.
\end{equation}
By \eqref{eq.3.09}--\eqref{eq.3.11}, $\int_0^T (U^j_{r-}-Y^{j}_{r-})dA^{j}_r\leq 0$. Hence $\int_0^T (U^j_{r-}-Y^{j}_{r-})dA^{j}_r= 0$ since
$U^j\geq Y^{j}$.
To show minimality of $K$, we set
\[
\tau_k=\inf\Big\{t>0: \sum_{j=1}^d \int_0^t (|f^j(r,U_r;\underline{Y}^j_r)|+ |f^j(r,\underline{Y}_r;U^j_r)|)\,dr >k \Big\}\wedge T.
\]
We are going to show that on $[0,\tau_k]$ we have
\begin{equation}\label{eq.snell}
\begin{split}
Y^j_t&=\esssup_{\tau\in\T_t} E\Big(\int_t^{\tau_k\wedge\tau}f^j(r,Y_r)\,dr\\
&\qquad + \int_t^{\tau_k\wedge\tau} d(V^j-A^j)_r + H^j_\tau(Y_\tau)\ind{\{\tau<\tau_k\}}+Y^j_{\tau_k}\ind{\{\tau\geq\tau_k\}} \big| \F_t\Big).
\end{split}
\end{equation}
Indeed, we can regard $(Y^{(n),j}, M^{(n),j}, K^{(n),j})$ as a solution of $\lrs(\xi^j,$ $f^j(\cdot,Y^{(n-1)};\cdot)+dV^j - dA^{(n),j}, H^j(Y^{(n-1)}))$, so by Remark \ref{rem.2.1}, 
\[
\begin{split}
Y^{(n),j}_t&=\esssup_{\tau\in\T_t} E\Big( \int_t^{\tau_k\wedge\tau}f^j(r,Y^{(n-1)}_r;Y^{(n),j}_r)\,dr \\
&\qquad+ \int_t^{\tau_k\wedge\tau} d(V^j-A^{(n),j})_r + H^j_\tau(Y^{(n-1)}_\tau)\ind{\{\tau<\tau_k\}}+Y^{(n),j}_{\tau_k}\ind{\{\tau\geq\tau_k\}} \big| \F_t\Big).
\end{split}
\]
By the above,  (A3) and (A7),
\begin{equation}\label{eq.snelln}
\begin{split}
Y^{(n),j}_t&\leq\esssup_{\tau\in\T_t} E\Big( \int_t^{\tau_k\wedge\tau}f^j(r,Y_r;Y^{(n),j}_r)\,dr \\
&\qquad+ \int_t^{\tau_k\wedge\tau} d(V^j-A^{(n),j})_r + H^j_\tau(Y_\tau)\ind{\{\tau<\tau_k\}}+Y^{j}_{\tau_k}\ind{\{\tau\geq\tau_k\}} \big| \F_t\Big).
\end{split}
\end{equation}
By \eqref{eq.3.8} and the fact that $E A^j_T <\infty$,
\[
\sup_{\tau\in\T_t}E|A^j_\tau-A^{(n),j}_\tau|\to 0
\]
as $n\rightarrow\infty$. By (A2), (A4) and the monotone convergence theorem,
\[
E\int_t^{\tau_k\wedge\tau}|f^j(r,Y_r;Y^{(n),j}_r)-f^j(r,Y_r)|\,dr\to 0
\]
as $n\rightarrow\infty$.
Therefore letting $n\to\infty$ in \eqref{eq.snelln} and using \cite[Lemma 3.15]{KliRzySlo} we get
\begin{equation}
\label{eq3.25}
\begin{split}
Y^j_t&\leq\esssup_{\tau\in\T_t} E\Big( \int_t^{\tau_k\wedge\tau}f^j(r,Y_r)\,dr\\
&\qquad + \int_t^{\tau_k\wedge\tau} d(V^j-A^j)_r + H^j_\tau(Y_\tau)\ind{\{\tau<\tau_k\}}+Y^j_{\tau_k}\ind{\{\tau\geq\tau_k\}} \big| \F_t\Big).
\end{split}
\end{equation}
Since $Y^j_{t\wedge\tau_k}\geq H^j_t(Y_t)\ind{\{t<\tau_k\}}+Y^j_{\tau_k}\ind{\{t\geq\tau_k\}}$ for $t\in[0,T]$, the supermartingale
\[
Y^j+ Y^j_0+\int_0^\cdot f^j(r,Y_r)\,dr+\int_0^\cdot d(V^j-A^j)_r =-K^j+ M^j
\]
dominates on $[0,\tau_k]$ the process $R$ defined by
\[
R_t=Y^j_0+\int_0^t f^j(r,Y_r)\,dr + \int_0^t d(V^j-A^j)_r + H^j_t(Y_t)\ind{\{t<\tau_k\}}+Y^j_{\tau_k}\ind{\{t\geq\tau_k\}},\quad t\in[0,T].
\]
By \cite[App.I.22 Theorem]{DellMey2}, $S$ defined by $S_t=\esssup_{\tau\in\T_t}E(R_{\tau\wedge\tau_k}|\F_t)$ is the minimal supermartingale which dominates the process $R$ on $[0,\tau_k]$. Hence
\begin{equation}\label{eq.3.18}
Y^j_t+Y^j_0+\int_0^t f^j(r,Y_r)\,dr+\int_0^t d(V^j-A^j)_r\geq S_t
\end{equation}
for $t\in[0,\tau_k]$. But
\[
\begin{split}
S_t&= \esssup_{\tau\in\T_t} E\Big( \int_0^{\tau_k\wedge\tau}f^j(r,Y_r)\,dr\\
&\qquad + \int_0^{\tau_k\wedge\tau} d(V^j-A^j)_r + H^j_\tau(Y_\tau)\ind{\{\tau<\tau_k\}}+Y^j_{\tau_k}\ind{\{\tau\geq\tau_k\}} \big| \F_t\Big)\\
&= \esssup_{\tau\in\T_t} E\Big( \int_t^{\tau_k\wedge\tau}f^j(r,Y_r)\,dr\\
&\qquad + \int_t^{\tau_k\wedge\tau} d(V^j-A^j)_r + H^j_\tau(Y_\tau)\ind{\{\tau<\tau_k\}}+Y^j_{\tau_k}\ind{\{\tau\geq\tau_k\}} \big| \F_t\Big)\\
&\qquad + \int_0^t f^j(r,Y_r)\,dr + \int_0^t d(V^j-A^j)_r.
\end{split}
\]
Combining this with \eqref{eq.3.18} we obtain
\[
\begin{split}
Y^j_t&\geq\esssup_{\tau\in\T_t} E\Big( \int_t^{\tau_k\wedge\tau}f^j(r,Y_r)\,dr\\
&\qquad + \int_t^{\tau_k\wedge\tau} d(V^j-A^j)_r +
H^j_\tau(Y_\tau)\ind{\{\tau<\tau_k\}}
+Y^j_{\tau_k}\ind{\{\tau\geq\tau_k\}} \big| \F_t\Big),
\end{split}
\]
which when combined with (\ref{eq3.25}) proves \eqref{eq.snell}.
By \eqref{eq.snell} and Remark \ref{rem.2.2}, there exist processes $\widetilde{K}^j,\widetilde{M}^j$ such that the triple $(Y^j_{\cdot\wedge\tau_k},\widetilde{M}^j_{\cdot\wedge\tau_k},\widetilde{K}^j_{\cdot\wedge\tau_k})$ is a solution of $\lrs(Y^j_{\tau_k}, f^{\tau_k} +dV^j_{\cdot\wedge\tau_k} - dA^{j}_{\cdot\wedge\tau_k}, H^j_{\cdot\wedge\tau_k}(Y_{\cdot\wedge\tau_k}))$ with $f^{\tau_k}=f^j(\cdot,Y;\cdot)\ind{[0,\tau_k]}$. In particular,
\[
\int_0^{\tau_k}(Y^j_{r-}-H^j_{r-}(Y_{r-}))\,d\widetilde{K}^j_r=0.
\]
By \cite[Corollary 2.2.]{klimsiak2014}, \eqref{eq.3.091} and uniqueness of the Doob--Meyer decomposition, $\widetilde{K}^j_{\cdot\wedge\tau_k}=K^j_{\cdot\wedge\tau_k}$. Consequently,
\[
\int_0^{\tau_k}(Y^j_{r-}-H^j_{r-}(Y_{r-}))\,d{K}^j_r=0.
\]
Letting $k\to\infty$ gives the minimality condition for $K$.
\end{proof}

\begin{rem}
Thanks to \eqref{eq.3.MC}, $EA_T$ is always finite (see Step 2 of the proof of Theorem \ref{tw.3}). Moreover, if
\[
E\sum_{j=1}^d \int_0^t (|f^j(r,U_r;\underline{Y}^j_r)|+ |f^j(r,\underline{Y}_r;U^j_r)|)\,dr<\infty,
\]
then also $E\sum_{j=1}^d \int_0^T |f^j(r,Y^j_r)|\,dr<\infty$, and, by the Doob-Meyer decomposition, $E K^j_T<\infty$ and $M^j$ is an uniformly integrable martingale for $j=1,\ldots,d$.
\end{rem}
\medskip
In the rest of this section we assume that $H$ has the form
\begin{equation}
H^j_t(y) = \max_{k\neq j} h_{j,k}(t,y^k),
\end{equation}
where $\{h_{j,k}\}_{j,k=1,\ldots, d}$ are continuous functions such that $h_{j,k}(t,c)\leq c$ for $c\in\R$. Such a form of $H$ appears in applications to the switching problem (see Section \ref{sec4}). The difference $c-h_{j,k}(t, c)$ can be viewed as  a cost of switching from  mode $j$ to mode $k$ in time $t$.

In addition to (A7), we  will need the following condition
considered in \cite{HamZhang2010}:
\begin{itemize}
\item[(A8)] there are no $y_1,y_2,\ldots,y_k\in\R$ and $j_1,j_2,\ldots,j_k\in\{1,\ldots,d\}$ such that for some $t\in[0,T]$,
\begin{equation}
\label{eq3.28}
y_1=h_{j_1,j_2}(t,y_2),\quad y_2=h_{j_2,j_3}(t,y_3),\ldots, y_k=h_{j_k,j_1}(t,y_1).
\end{equation}
\end{itemize}
\begin{ex}
Assume $H$ has the form \eqref{eq.1.2} 
with some positive continuous processes $\{c^{j,k}\}$ such that
\begin{equation}\label{eq.3.20}
c^{i,j}(t)+c^{j,k}(t)>c^{i,k}(t)
\end{equation}
for all $t\in[0,T]$ and $i,j,k=1,\ldots,d$ such that $i,j,k$ are all different. Then (A8) is satisfied. Indeed, striving for a contradiction, suppose  that there exist $y_1,y_2,\ldots,y_k\in\R$ and $j_1,j_2,\ldots,j_k\in\{1,\ldots,d\}$ such that (\ref{eq3.28}) is satisfied for some  $t\in[0,T]$. Then
\[
y_1= y_1- (c^{j_1,j_2}(t)+c^{j_2,j_3}(t) +\ldots+c^{j_{k-1},j_k}(t)+c^{j_k,j_1}(t)).
\]
On the other hand,  by \eqref{eq.3.20},
\[
c^{j_1,j_2}(t)+c^{j_2,j_3}(t)+\ldots+c^{j_{k-1},j_k}(t)+c^{j_k,j_1}(t)
>c^{j_1,j_k}(t)+c^{j_k,j_1}(t)>0.
\]
\end{ex}

Assume that $\mathbb{F}$ is quasi-left continuous and $V$ is continuous. If $\Delta K^j_\tau >0$, then $Y^j_{\tau-}=H^j_{\tau-}(Y_{\tau-})$ and $\Delta Y^j_\tau = -\Delta K^j_\tau$ for $\tau\in\Tp$. In \cite[Remark 3.14]{Klimsiak2016} (see also Step 4 of the proof of \cite[Theorem 3.2]{HamZhang2010}) it is proved that in case of equations with no upper barrier condition  (A8) ensures continuity of $K$. It turns out that this is also true for equations with upper barrier.

\begin{stw}\label{lm.Kcont}
Assume that $\mathbb{F}$ is quasi-left continuous, $V$ has no predictable jumps and {\em (A8)} is satisfied. Let $(Y,M,K,A)$ be a solution of $\ors(\xi,f+dV,H,U)$. Then $K$ is continuous.
\end{stw}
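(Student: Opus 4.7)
The strategy is a contradiction argument of the Ham\^{e}ne--Zhang / Klimsiak type: assuming $K^{j_1}$ has a predictable jump at some stopping time $\tau$, we build a finite cycle of equalities of the form $y_1=h_{j_1,j_2}(\tau,y_2),\ldots,y_k=h_{j_k,j_1}(\tau,y_1)$, which is exactly what (A8) forbids. Since every $K^j$ is predictable c\`adl\`ag, its jumps sit on a union of graphs of predictable stopping times, so it suffices to rule out $\Delta K^j_\tau>0$ at predictable $\tau\in\Tp$.

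The first step is to write down the jump relation. Quasi-left continuity gives $\Delta M^j_\tau=0$ at every predictable $\tau$, the hypothesis on $V$ gives $\Delta V^j_\tau=0$, and the absolutely continuous integral $\int_0^\cdot f^j(r,Y_r)\,dr$ contributes no jump either, so the defining equation (iv) yields
\[
\Delta Y^j_\tau=-\Delta K^j_\tau+\Delta A^j_\tau,\qquad j=1,\ldots,d.
\]
Suppose towards contradiction that $\Delta K^{j_1}_\tau>0$. The Skorokhod minimality condition $\int_0^T(Y^{j_1}_{r-}-H^{j_1}_{r-}(Y_{r-}))\,dK^{j_1}_r=0$ together with predictability of $K^{j_1}$ forces $Y^{j_1}_{\tau-}=H^{j_1}_{\tau-}(Y_{\tau-})$. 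By the joint continuity of $(t,y)\mapsto h_{j,k}(t,y)$ and the convention $H_{t-}(Y_{t-})=\lim_{s\nearrow t}H_s(Y_s)$, this common value equals $\max_{k\neq j_1}h_{j_1,k}(\tau,Y^k_{\tau-})$. Choose $j_2\neq j_1$ attaining the maximum, so that
\[
Y^{j_1}_{\tau-}=h_{j_1,j_2}(\tau,Y^{j_2}_{\tau-}).
\]
Combining this with $Y^{j_1}_\tau\ge H^{j_1}_\tau(Y_\tau)\ge h_{j_1,j_2}(\tau,Y^{j_2}_\tau)$ yields the key inequality
\[
\Delta Y^{j_1}_\tau\ge h_{j_1,j_2}(\tau,Y^{j_2}_\tau)-h_{j_1,j_2}(\tau,Y^{j_2}_{\tau-}),
\]
and since $h_{j_1,j_2}$ is increasing in its second argument (implied by (A7)), the sign of the right-hand side agrees with the sign of $\Delta Y^{j_2}_\tau$.

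Now the iteration: I would select the initial $j_1$ so that $\Delta Y^{j_1}_\tau<0$ among indices with $\Delta K^{j}_\tau>0$ (this is legitimate because the alternative $\Delta A^j_\tau\ge\Delta K^j_\tau>0$ for all such $j$ forces $Y^j_{\tau-}=U^j_{\tau-}$ and, using $H(U)\le U$ from (M), already produces a cycle of equalities in the $U_{\tau-}$-values that is forbidden by (A8)). With $\Delta Y^{j_1}_\tau<0$ the displayed inequality forces $\Delta Y^{j_2}_\tau<0$, hence $\Delta K^{j_2}_\tau>\Delta A^{j_2}_\tau\ge 0$, so in particular $\Delta K^{j_2}_\tau>0$ and we may repeat the selection step to produce $j_3\neq j_2$ with $Y^{j_2}_{\tau-}=h_{j_2,j_3}(\tau,Y^{j_3}_{\tau-})$ and $\Delta Y^{j_3}_\tau<0$. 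Since $\{1,\ldots,d\}$ is finite, the sequence $j_1,j_2,\ldots$ eventually revisits an index; extracting the cycle $j_m,j_{m+1},\ldots,j_n=j_m$ and setting $y_i=Y^{j_i}_{\tau-}$ we obtain exactly the forbidden configuration $y_i=h_{j_i,j_{i+1}}(\tau,y_{i+1})$ of (A8), a contradiction. This shows $\Delta K^j_\tau=0$ at every predictable $\tau$; because $K^j$ is predictable c\`adl\`ag, it is continuous.

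The main technical obstacle I expect is precisely the ``simultaneous jump'' scenario where $\Delta K^{j_1}_\tau$ and $\Delta A^{j_1}_\tau$ are both positive (which does not occur in the no-upper-barrier setting of \cite{Klimsiak2016,HamZhang2010}). Handling it cleanly requires exploiting the Mokobodzki sandwich $H(U)\le X\le U$ to force $H^{j}_{\tau-}(U_{\tau-})=U^j_{\tau-}$ along the entire chain and then applying (A8) directly to the barrier values $U^{j_n}_{\tau-}$, rather than to the $Y^{j_n}_{\tau-}$.
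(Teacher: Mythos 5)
Your overall strategy coincides with the paper's: reduce to predictable $\tau$, use the minimality of $K^{j_1}$ to get $Y^{j_1}_{\tau-}=h_{j_1,j_2}(\tau,Y^{j_2}_{\tau-})$, combine this with $Y^{j_1}_{\tau}\ge H^{j_1}_\tau(Y_\tau)\ge h_{j_1,j_2}(\tau,Y^{j_2}_{\tau})$ and the monotonicity of $h_{j_1,j_2}$ to propagate the strict decrease $\Delta Y_\tau<0$ from one index to the next, note that $\Delta Y^{j_{n+1}}_\tau<0$ together with $\Delta A^{j_{n+1}}_\tau\ge0$ forces $\Delta K^{j_{n+1}}_\tau>0$, and finally close a cycle contradicting (A8). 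This iteration step is exactly the paper's. The divergence is at the initialization: the paper starts the chain from the identity $\Delta Y^{j}_\tau=-\Delta K^{j}_\tau$ on $\{\Delta K^j_\tau>0\}$ (stated in the paragraph preceding the proposition), i.e.\ it takes as given that $A^{j}$ does not jump simultaneously with $K^{j}$, whereas you try to argue around the simultaneous-jump scenario.

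That part of your argument has a genuine gap. In the ``alternative'' branch you assume $\Delta A^j_\tau\ge\Delta K^j_\tau>0$ for every $j$ in the set $S$ of indices where $K$ jumps, deduce $Y^j_{\tau-}=U^j_{\tau-}=H^j_{\tau-}(Y_{\tau-})$, and claim this produces a cycle among the values $U^{j}_{\tau-}$ forbidden by (A8). But what you actually obtain at such a $j$ is a single equality $U^{j}_{\tau-}=h_{j,k}(\tau,U^{k}_{\tau-})$ for some $k=k(j)\neq j$ (plus, when $h_{j,k}$ is strictly increasing, $Y^{k}_{\tau-}=U^{k}_{\tau-}$); nothing forces $k(j)$ to lie in $S$, nor to satisfy $Y^{k}_{\tau-}=H^{k}_{\tau-}(Y_{\tau-})$ or $U^{k}_{\tau-}=H^{k}_{\tau-}(U_{\tau-})$, so the chain of relations cannot be continued and no cycle is produced. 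Already for $d=2$ one can have $Y^1_{\tau-}=U^1_{\tau-}=Y^2_{\tau-}-c^{1,2}(\tau)$ while $Y^2_{\tau-}>Y^1_{\tau-}-c^{2,1}(\tau)=H^2_{\tau-}(Y_{\tau-})$, so that (A8) is never applicable. Hence the case where $\Delta K^j_\tau$ and $\Delta A^j_\tau$ are both positive is not excluded by your argument; to close it you would need, as the paper does, the separate assertion that $\Delta A^j_\tau=0$ whenever $\Delta K^j_\tau>0$ (equivalently, that $Y^j_{\tau-}=H^j_{\tau-}(Y_{\tau-})$ and $Y^j_{\tau-}=U^j_{\tau-}$ cannot hold simultaneously), and this does not follow from the Mokobodzki inequality $H(U)\le U$ alone.
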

\begin{proof}
Since $K$ is predictable, it is enough to show that $\Delta K_\tau=0$ for $\tau\in\Tp.$ Aiming  for a contradiction, suppose that $\Delta K^{j_1}_\tau>0$ for some $\tau\in\Tp$  and $j_1\in\{1,\ldots,d\}$.
Then  from the minimality condition for
$K^{j_1}$ it follows that $Y^{j_1}_{\tau-}= H^{j_1}_{\tau-}(Y_{\tau-})$. Consequently, by the definition of $H^{j_1}$, there exists  $j_2\neq j_1$ such that $Y^{j_1}_{\tau-}=h_{j_1,j_2}(\tau, Y^{j_2}_{\tau-})$. Hence
\[
h_{j_1,j_2}(\tau, Y^{j_2}_{\tau-})=Y^{j_1}_{\tau-} > Y^{j_1}_{\tau} \geq H^{j_1}_{\tau}(Y_{\tau})\geq h_{j_1,j_2}(\tau, Y^{j_2}_{\tau}).
\]
By the above and the minimality condition for $K^{j_2}$ we have $\Delta Y^{j_2}_\tau<0$. Since $\mathbb{F}$ is quasi-left continuous and $\tau\in\Tp$, $\Delta Y^{j_2}_\tau=\Delta A^{j_2}_\tau - \Delta K^{j_2}_\tau$. But $\Delta Y^{j_2}_\tau < 0$ and $\Delta A^{j_2}_\tau\geq 0$, so $\Delta K^{j_2}_\tau >0$.
Repeating this argument, for each $k\in\N$ we can find $j_k\in\{1,2\ldots,d\}$ such that $\Delta K^{j_k}_{\tau}>0$. Since the indices $j_k$ take values in the finite set $\{1,\dots,d\}$, without loss of generality we can assume that $j_k=j_1$ for some $k\in\N.$ We then have
\[
Y^{j_1}_{\tau-}=h_{j_1,j_2}(\tau, Y^{j_2}_{\tau-}),\quad Y^{j_2}_{\tau-}=h_{j_2,j_3}(\tau, Y^{j_3}_{\tau-}),\ldots, Y^{j_k}_{\tau-}=h_{j_k,j_1}(\tau, Y^{j_1}_{\tau-}),
\]
which is in contradiction with  (A8).
\end{proof}

\section{Switching problem and uniqueness  of solutions}
\label{sec4}

In this section we assume that  $f^j(t,y) \equiv f^j(t,y^j)$,
$y\in\R^d$, and that $H$ has the form  \eqref{eq.1.2}, i.e.
\[
H^j_t(y)=\max_{k\neq j}h_{j,k}(t,y^k)\quad\mbox{with
}h_{j,k}(t,y^k)=y^k-c^{j,k}(t).
\]
Let $\{\theta_n\}$ be an increasing
sequence of stopping times with values in $[0,T]$ such that
${N=\inf\{ n\ge1:\theta_n=T\} <\infty}$ a.s., and let
$\{\alpha_n\}$ be a sequence of random variables with values in
$\{1,\ldots,d\}$ such that $\alpha_n$ is a
$\F_{\theta_n}$-measurable for each $n\in\N$. Observe that the set
$\{N=n\}$ is $\F_{\theta_n}$-measurable for each $n\in\N$. Recall
that an admissible switching control process $a$ determined by
$\{\theta_n\}$ and $\{\alpha_n\}$ is defined by
\[
a(t) = \sum_{n=0}^{N-1}\alpha_n\ind{[\theta_n,\theta_{n+1})}(t) +
\alpha_N\ind{\{\theta_N\}}(t), \quad t\in[\theta_0,T].
\]
Note that the control $a$ is defined for some given initial data $(\alpha_0,\theta_0)$. For simplicity, we omit this dependence in our notation. In what follows we denote by $\A^j_t$ the set of all  admissible
switching control processes with the initial data
$(\alpha_0,\theta_0)=(j,t).$

Below we generalize the profitability model \eqref{eq.1.4} described
in the introduction. Set $\Fb=\Fb^X$. Assume that the power station
can  produce electricity in one of $d$ modes and that the dynamics of
the income in $j$-th mode is driven by $f^j$ for $j=1\ldots,d$
(for simplicity, in our notation  we omit the dependence of $f^j$ on $X$). For a switching strategy $a\in\A^j_t$,  the profit of
the power station is now given by the first component of a solution
$(R^{(a)}, M^{(a)},D^{(a)})$ of the reflected BSDE of the form
\begin{equation}\label{eq.switching}
\begin{cases}
R^{(a)}_s= \xi^{a(T)}+ \int_s^T f^{a(r)}(r,R^{(a)}_r)\,dr
+ \int_s^T dV^{a(r)}_r\smallskip\\
\qquad\quad-\sum_{n=1}^{N}
c^{\alpha_{n-1}, \alpha_n}(\theta_n)\ind{(s,T]}(\theta_n)
- \int_s^T dD^{(a)}_r - \int_s^T dM^{(a)}_r,\smallskip\\
R^{(a)}_s\leq U^{a(s)}_s,\qquad s\in[t,T],\smallskip\\
\int_t^T (U^{a(r-)}_{r-} - R^{(a)}_{r-})\, dD^{(a)}_r=0.
\end{cases}
\end{equation}
The processes $V^j$ in \eqref{eq.switching} allow us to take into account some external factors which are not absolutely continuous with respect to Lebesgue measure (for instance, $V^j$ can be a jump process modelling some extra costs or incomes). Our goal is to find the maximal profitability of the station, i.e. for each $j\in\{1,\dots,d\}$ we want find the quantity
\[
\esssup_{a\in\A^j_t} R^{(a)}_t,\qquad t\in[0,T].
\]
The second goal is to find the optimal switching strategy, i.e. for each $j$ and $t$ we want  find $\widehat a\in\A^j_t$ such that
\[
R^{(\widehat a)}_t=\esssup_{a\in\A^j_t} R^{(a)}_t.
\]
Before solving these problems, some  comments on the existence of a solution of \eqref{eq.switching} are in order.
\begin{stw}\label{prop.4.3}
Let $a\in\A^j_t$. If {\em (A1), (A2), (A4), (A5)} are satisfied, $U^-$ is of class {\em D} and
\begin{equation}\label{eq.4.20}
E\sum_{n=1}^N|c^{\alpha_{n-1}, \alpha_n}(\theta_n)|<\infty,
\end{equation}
then there exists a unique solution of \eqref{eq.switching}.
\end{stw}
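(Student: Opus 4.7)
The plan is to reduce \eqref{eq.switching} to a single one-dimensional reflected BSDE with upper barrier and then invoke the existence and uniqueness theory of Section~\ref{sec2} (equivalently, \cite[Theorem 2.3(ii)]{MT2016}). For the fixed admissible strategy $a\in\A^j_t$ with switching times $\theta_0=t<\theta_1<\ldots<\theta_N=T$, define
\[
\tilde\xi=\xi^{a(T)},\quad \tilde f(r,y)=f^{a(r)}(r,y),\quad \tilde U_s=U^{a(s)}_s,
\]
\[
\tilde V_s=\sum_{n=0}^{N-1}\int_0^s\ind{[\theta_n,\theta_{n+1})}(r)\,dV^{\alpha_n}_r-\sum_{n=1}^N c^{\alpha_{n-1},\alpha_n}(\theta_n)\ind{\{\theta_n\le s\}}.
\]
A direct computation shows that $(R^{(a)},M^{(a)},D^{(a)})$ is a solution of \eqref{eq.switching} if and only if it is a solution of the one-dimensional reflected equation $\urs(\tilde\xi,\tilde f+d\tilde V,\tilde U)$.

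I would then verify hypotheses (H1)--(H4) of Section~\ref{sec2} for the data $(\tilde\xi,\tilde f,\tilde V)$ together with the class~D property of $\tilde U^-$. Monotonicity (H1) follows from (A2) and the standing convention $f^j(t,y)\equiv f^j(t,y^j)$ of Section~\ref{sec4}; pointwise integrability (H2) follows from $|\tilde f(r,y)|\le\sum_{j=1}^d|f^j(r,y)|$ combined with (A5); continuity (H3) is inherited from (A4); and the $L^1$ components of (H4), namely $\tilde\xi\in L^1(\F_T)$ and $\tilde V\in\Vc_0$, are immediate from $|\tilde\xi|\le\sum_j|\xi^j|$, $|\tilde V|_T\le\sum_{j=1}^d|V^j|_T+\sum_{n=1}^N|c^{\alpha_{n-1},\alpha_n}(\theta_n)|$, (A1), and \eqref{eq.4.20}. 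Finally, $\tilde U^-_s\le\max_j U^{j,-}_s$, and a finite maximum of processes of class~D is of class~D.

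With these verifications in hand, the one-dimensional existence theory for reflected BSDEs with upper barrier (the upper-barrier dual of Proposition~\ref{stw.2.1}, equivalently \cite[Theorem 2.3(ii)]{MT2016}) yields a solution. Uniqueness then follows by applying the comparison principle of \cite[Corollary~2.2]{klimsiak2014} to any two candidate solutions of $\urs(\tilde\xi,\tilde f+d\tilde V,\tilde U)$, which forces the $Y$-components to coincide, and invoking uniqueness of the Doob--Meyer decomposition of $R^{(a)}$ to identify the $M^{(a)}$ and $D^{(a)}$ components.

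The main technical obstacle is verifying the semimartingale-dominator part of (H4): one must exhibit a càdlàg semimartingale $S\le\tilde U$ that is a difference of supermartingales of class~D and satisfies $E\int_0^T|\tilde f(r,S_r)|\,dr<\infty$. In the $L^1$ setting one cannot in general take $S\equiv 0$, and this is precisely where the class~D assumption on $U^-$ is needed: combined with a localization by stopping times in the spirit of the $\sigma_k$ and $\tau_k$ used in Steps~2 and~3 of the proof of Theorem~\ref{tw.3}, it permits the construction of such a dominator (for example, built from a uniformly integrable majorant of $\max_j U^{j,-}$), after which the on-diagonal monotonicity (A2) allows one to bound $|\tilde f(r,S_r)|$ by quantities controlled by (A5) on each localization interval.
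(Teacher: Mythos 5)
Your reduction of \eqref{eq.switching} to the one-dimensional equation $\urs(\tilde\xi,\tilde f+d\tilde V,\tilde U)$ followed by an appeal to \cite[Theorem 2.3(ii)]{MT2016} is exactly the paper's proof, which consists of the single line ``Follows from \cite[Theorem 2.3(ii)]{MT2016}.'' The ``main technical obstacle'' in your final paragraph is not actually an obstacle: for an equation reflected from above only, the hypothesis of the cited theorem is precisely that $\tilde U^-$ is of class D (no Mokobodzki-type semimartingale dominator is required there), and you have already verified this via $\tilde U^-_s\le\max_j U^{j,-}_s$.
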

\begin{proof}
Follows from \cite[Theorem 2.3(ii)]{MT2016}.
\end{proof}
Note that if (M) is satisfied, then $U^-$ is of class D. Later we will show, that for some $a\in\A_t^j$, there exists a solution of \eqref{eq.switching} even if we do not assume \eqref{eq.4.20} (see Theorem \ref{prop.4.4}).

\begin{lm}\label{lm.4.2}
Let $M=(M^1,\ldots,M^d)$ be an $\R^d$-valued local $\mathbb{F}$-martingale  and
$a\in \A^j_t$. Then the process
\[
M^{(a)}_s=\sum_{n=0}^{N-1} \sum_{j=1}^d\ind{\{\alpha_n=j\}}\int_{\theta_n\wedge s}^{\theta_{n+1}\wedge s}dM^{j}_r, \qquad s\in[0,T],
\]
is also a local $\mathbb{F}$-martingale.
\end{lm}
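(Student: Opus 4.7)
The plan is to recognize $M^{(a)}$ as a sum of stochastic integrals of bounded simple predictable processes against the local martingales $M^j$, and then exploit the almost-sure finiteness of $N$ via localization.

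First, I would rewrite
\[
M^{(a)}_s=\sum_{n=0}^{\infty}\sum_{j=1}^d \ind{\{\alpha_n=j,\;n<N\}}\bigl(M^j_{\theta_{n+1}\wedge s}-M^j_{\theta_n\wedge s}\bigr),
\]
noting that for $n\ge N$ we have $\theta_n=\theta_{n+1}=T$, so the increments vanish and the outer sum is in fact finite for each $\omega$. Moreover, $\{n<N\}=\{\theta_n<T\}\in\F_{\theta_n}$ for $n\ge 1$ (and is trivial for $n=0$), hence the random variable $\xi^{n,j}:=\ind{\{\alpha_n=j,\;n<N\}}$ is bounded and $\F_{\theta_n}$-measurable.

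Next, I would argue that each summand is a local martingale. The process $H^{n,j}_r:=\xi^{n,j}\ind{(\theta_n,\theta_{n+1}]}(r)$ is a bounded simple predictable integrand, so its stochastic integral against $M^j$ is a local martingale and equals $\xi^{n,j}(M^j_{\theta_{n+1}\wedge s}-M^j_{\theta_n\wedge s})$ for $s\in[0,T]$.

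Finally, to handle the random number of summands, I would localize at $\theta_k$. For each $k\in\N$ the process $M^{(a)}_{\cdot\wedge\theta_k}$ reduces to the finite sum $\sum_{n=0}^{k-1}\sum_{j=1}^d\xi^{n,j}(M^j_{\theta_{n+1}\wedge\cdot\wedge\theta_k}-M^j_{\theta_n\wedge\cdot\wedge\theta_k})$, since for $n\ge k$ both stopped integration limits coincide. As a finite sum of local martingales, $M^{(a)}_{\cdot\wedge\theta_k}$ is itself a local martingale. The assumption $N<\infty$ a.s. gives $\theta_k\nearrow T$ a.s., so a standard diagonal choice of localizing sequences (pick $\tau^k_m$ reducing $M^{(a)}_{\cdot\wedge\theta_k}$ and set $\sigma_m:=\theta_m\wedge\tau^m_m$) produces stopping times $\sigma_m\nearrow T$ a.s. for which $M^{(a)}_{\cdot\wedge\sigma_m}$ is a uniformly integrable martingale, proving $M^{(a)}\in\M_{loc}$. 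The only delicate point is the $\F_{\theta_n}$-measurability of each $\xi^{n,j}$, which is what legitimises the simple-integrand step; once this is in hand the rest is routine bookkeeping.
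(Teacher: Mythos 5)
Your proof is correct, but it takes a genuinely different route from the paper's. The paper works directly with a fundamental sequence $\{\tau_m\}$ for $M$ and verifies the martingale property of $M^{(a)}_{\cdot\wedge\tau_m}$ by hand: it writes the increment $M^{(a)}_{s\wedge\tau_m}-M^{(a)}_{u\wedge\tau_m}$ as $\sum_{n}\sum_j \ind{\{\alpha_n=j\}}\int_{(\theta_n\vee u)\wedge s\wedge\tau_m}^{\theta_{n+1}\wedge s\wedge\tau_m}dM^j_r$, conditions each term first on $\F_{\theta_n\vee u}$ (where the indicator is measurable), and kills the inner expectation by optional stopping applied to the true martingales $M^j_{\cdot\wedge\tau_m}$. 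You instead recognize each summand as the stochastic integral of the bounded simple predictable integrand $\xi^{n,j}\ind{(\theta_n,\theta_{n+1}]}$ against $M^j$, and dispose of the random number of summands by stopping at $\theta_k$ and a diagonal localization. The key measurability fact --- that $\ind{\{\alpha_n=j,\ n<N\}}$ is $\F_{\theta_n}$-measurable --- plays the same role in both arguments. The paper's computation is more elementary (only the tower property and optional stopping) and produces the localizing sequence explicitly, but it silently interchanges the conditional expectation with the sum over the random range $n\le N-1$; your localization at $\theta_k$, which keeps every sum finite before any limit is taken, is arguably cleaner on precisely that point, at the cost of invoking the stability of $\M_{loc}$ under integration of bounded simple predictable processes and the (routine, but worth stating) lemma that a process which is a local martingale when stopped along a sequence of stopping times increasing to $T$ is itself a local martingale.
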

\begin{proof}
Let $\{\tau_m\}$ be a fundamental sequence for $M$.  We
will show that for all $0\leq u\leq s\leq T$ and $m\in\N$,
\begin{equation}
\label{eq4.12}
E\Big(M^{(a)}_{s\wedge\tau_m} - M^{(a)}_{u\wedge\tau_m}\Big| \F_u\Big)
=0.
\end{equation}
Write $B^j_n=\{\alpha_n=j\}$. Then
\begin{equation}\label{eq.4.3}
M^{(a)}_{s\wedge\tau_m} - M^{(a)}_{u\wedge\tau_m} = \sum_{n=0}^{N-1} \sum_{j=1}^d\ind{B^j_n}\int_{(\theta_n\vee u)\wedge s\wedge\tau_m}^{\theta_{n+1}\wedge s\wedge\tau_m}dM^{j}_r.
\end{equation}
Since the set $B^j_n$ is $\F_{\theta_n\vee u}$-measurable, we have
\begin{equation}\label{eq.4.2}
\begin{split}
E\Big(\sum_{j=1}^d\ind{B^j_n}\int_{(\theta_n\vee u)\wedge s\wedge \tau_m}^{\theta_{n+1}\wedge s\wedge\tau_m}dM^{j}_r \Big| \F_u\Big) &= E\Big(E\Big(\sum_{j=1}^d\ind{B^j_n}\int_{(\theta_n\vee u)\wedge s\wedge\tau_m}^{\theta_{n+1}\wedge s\wedge\tau_m} dM^{j}_r \Big| \F_{\theta_n\vee u}\Big) \Big| \F_u\Big)\\
&= E\Big(\sum_{j=1}^d\ind{B^j_n} E\Big(\int_{(\theta_n\vee u)\wedge s\wedge\tau_m}^{\theta_{n+1}\wedge s\wedge\tau_m} dM^{j}_r \Big| \F_{\theta_n\vee u}\Big) \Big| \F_u\Big)\\
&=0\quad P\text{-a.s.},
\end{split}
\end{equation}
where the last equality follows from the fact that $M^j_{\cdot\wedge\tau_m}$ is a true martingale for all $j=1,\ldots,d$ and $m\in\N.$
Therefore  \eqref{eq4.12} follows from \eqref{eq.4.3}, \eqref{eq.4.2} and linearity of the conditional expectation.
\end{proof}


Let $\widetilde{\A}^j_t$ denote the set of all admissible controls $a\in\A^j_t$ for which there exists a solution of \eqref{eq.switching}.

\begin{rem}\label{rem.4.3}
Let $a\in\widetilde{A}^j_t$ and (A2) be satisfied. Then from \cite[Corollary 2.2]{klimsiak2014} it follows that the solution of \eqref{eq.switching} is unique.
\end{rem}

\begin{tw}\label{prop.4.4}
Assume that $\mathbb{F}$ is quasi-left continuous, assumptions
{\em (A1)--(A8)}, {\em (M)} are satisfied and  $V$
has no predictable jumps.
\begin{enumerate}
\item[\rm (i)] $\widetilde\A^j_t\neq\emptyset$ for all $t\in[0,T]$ and $j=1,2,\ldots,d.$
\item[\rm(ii)]
If $(Y,M,K,A)$ is a solution of $\ors(\xi,f+dV,H,U)$, then
\[
Y^j_t=\esssup_{a\in\widetilde\A^j_t} R^{(a)}_t,\qquad t\in[0,T],
\]
where $(R^{(a)},M^{(a)},D^{(a)})$ denotes a solution of the reflected BSDE \eqref{eq.switching}.
\item[\rm(iii)] Fix $t\in[0,T]$ and $j\in\{1,\ldots,d\}.$ Define $\widehat a$ by
\[
\widehat a (s)=\sum^{\infty}_{n=0}\widehat\alpha_n\ind{[\widehat{\theta}_{n},\widehat{\theta}_{n+1})}(s),\qquad s\in[t,T],
\]
where  $\widehat{\theta}_0=t, \widehat{\alpha}_0=j$, and for
$n\ge1$ we set
\[
\widehat{\theta}_n = \inf\Big\{s > \widehat{\theta}_{n-1}:
Y^{\widehat{\alpha}_{n-1}}_s =\max_{k\neq \widehat{\alpha}_{n-1}}
h_{\widehat{\alpha}_{n-1},k}(s,Y^{\widehat{\alpha}_{n-1}}_s)
\Big\}\wedge T,
\]
where $\widehat{\alpha}_n$ is the smallest index such that
\[
Y^{\widehat{\alpha}_{n-1}}_{\widehat{\theta}_n}=
h_{\widehat{\alpha}_{n-1},\widehat{\alpha}_n}\left(\widehat{\theta}_n,Y^{\widehat{\alpha}_{n-1}}_{\widehat{\theta}_n}\right),
\]
if $\widehat{\theta}_n<T$, and $\widehat{\alpha}_n$ is an
arbitrary index  otherwise. Then 
$\widehat a\in\A^j_t$ and $Y^j_t=R^{(\widehat{a})}_t.$
\end{enumerate}
\end{tw}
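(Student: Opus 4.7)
The plan is to prove (iii) first, since the optimal strategy $\widehat a$ gives an explicit element of $\widetilde\A^j_t$ (hence (i)) and realises the equality $Y^j_t=R^{(\widehat a)}_t$ supplying the lower bound in (ii); the matching upper bound $Y^j_t\ge R^{(a)}_t$ for arbitrary $a\in\widetilde\A^j_t$ will then be obtained by a backward pasting argument on the intervals $[\theta_{n-1},\theta_n]$, using the comparison theory for one-dimensional reflected BSDEs recalled in Section \ref{sec2}.

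The hard step in (iii) is to show that the sequence $\{\widehat\theta_n\}$ stabilises at $T$ in finitely many steps, i.e.\ $\widehat N:=\inf\{n:\widehat\theta_n=T\}<\infty$ $P$-a.s. Aiming for a contradiction, suppose that on a set of positive probability $\widehat\theta_n\nearrow\theta^*\le T$ with $\widehat\theta_n<\theta^*$ for infinitely many $n$. Since the modes $\widehat\alpha_n$ take only finitely many values, pigeonhole furnishes a subsequence along which the pair $(\widehat\alpha_{n-1},\widehat\alpha_n)$ cycles through indices $j_1,j_2,\ldots,j_k,j_1$. At each $\widehat\theta_n$ the definition of $\widehat\alpha_n$ gives $Y^{\widehat\alpha_{n-1}}_{\widehat\theta_n}=h_{\widehat\alpha_{n-1},\widehat\alpha_n}(\widehat\theta_n,Y^{\widehat\alpha_n}_{\widehat\theta_n})$. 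Letting $n\to\infty$ along the subsequence and using that $Y^j$ is c\`adl\`ag (so $Y^j_{\widehat\theta_n}\to Y^j_{\theta^*-}$) together with the joint continuity of $h_{j,k}$, one obtains values $y_i:=Y^{j_i}_{\theta^*-}$ satisfying $y_1=h_{j_1,j_2}(\theta^*,y_2),\ldots,y_k=h_{j_k,j_1}(\theta^*,y_1)$, contradicting (A8).

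With $\widehat N<\infty$ in hand, I build a solution of \eqref{eq.switching} for $\widehat a$ by pasting. By definition $Y^{\widehat\alpha_{n-1}}_s>H^{\widehat\alpha_{n-1}}_s(Y_s)$ for $s\in[\widehat\theta_{n-1},\widehat\theta_n)$; combined with the minimality condition for $K^{\widehat\alpha_{n-1}}$ and continuity of $K^{\widehat\alpha_{n-1}}$ from Proposition \ref{lm.Kcont} (whose hypotheses hold), this forces $K^{\widehat\alpha_{n-1}}$ to be constant on $[\widehat\theta_{n-1},\widehat\theta_n]$. By Remark \ref{rem.3.2}, $Y^{\widehat\alpha_{n-1}}$ restricted to this interval then satisfies a one-dimensional upper-reflected equation with barrier $U^{\widehat\alpha_{n-1}}$, generator $f^{\widehat\alpha_{n-1}}+dV^{\widehat\alpha_{n-1}}$ and terminal value $Y^{\widehat\alpha_{n-1}}_{\widehat\theta_n}$ at $\widehat\theta_n$. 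By the definition of $\widehat\alpha_n$, the latter equals $Y^{\widehat\alpha_n}_{\widehat\theta_n}-c^{\widehat\alpha_{n-1},\widehat\alpha_n}(\widehat\theta_n)$, which is precisely the cost-adjusted terminal value in \eqref{eq.switching}. Setting $R^{(\widehat a)}_s=Y^{\widehat a(s)}_s$, defining $M^{(\widehat a)}$ via Lemma \ref{lm.4.2} applied to $M=(M^1,\ldots,M^d)$, and $D^{(\widehat a)}$ by pasting the restrictions of $A^{\widehat\alpha_{n-1}}$ to $[\widehat\theta_{n-1},\widehat\theta_n]$, one verifies that $(R^{(\widehat a)},M^{(\widehat a)},D^{(\widehat a)})$ solves \eqref{eq.switching}. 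This yields $\widehat a\in\widetilde\A^j_t$ and $R^{(\widehat a)}_t=Y^j_t$.

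For the upper bound in (ii), fix $a\in\widetilde\A^j_t$ with data $\{(\alpha_n,\theta_n)\}$. By Remark \ref{rem.3.2}, on $[\theta_{n-1},\theta_n]$ the process $Y^{\alpha_{n-1}}$ solves a one-dimensional $\rs$ with upper barrier $U^{\alpha_{n-1}}$, lower barrier $H^{\alpha_{n-1}}(Y)$, generator $f^{\alpha_{n-1}}+dV^{\alpha_{n-1}}$ and terminal value $Y^{\alpha_{n-1}}_{\theta_n}$ at $\theta_n$; the lower-barrier inequality plus the form of $H^{\alpha_{n-1}}$ gives $Y^{\alpha_{n-1}}_{\theta_n}\ge Y^{\alpha_n}_{\theta_n}-c^{\alpha_{n-1},\alpha_n}(\theta_n)$ for $\theta_n<T$. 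On the same interval, $R^{(a)}$ satisfies an upper-reflected equation with barrier $U^{\alpha_{n-1}}$, generator $f^{\alpha_{n-1}}+dV^{\alpha_{n-1}}$, and terminal value $R^{(a)}_{\theta_n}+c^{\alpha_{n-1},\alpha_n}(\theta_n)$ at $\theta_n-$. Backward induction on $n=N,N-1,\ldots,1$ with base case $Y^{\alpha_N}_{T}=\xi^{\alpha_N}=R^{(a)}_{T}$, using the one-dimensional comparison result underlying Lemma \ref{lm.3.3} (the extra non-negative increment $dK^{\alpha_{n-1}}$ only raises $Y^{\alpha_{n-1}}$ above the unreflected analogue), gives $Y^{\alpha_{n-1}}_{\theta_{n-1}}\ge R^{(a)}_{\theta_{n-1}}$ at each step; taking $n=1$ yields $Y^j_t\ge R^{(a)}_t$, which combined with (iii) proves (ii). The principal obstacle is the finiteness of $\widehat N$, which genuinely rests on (A8) and the continuity of $K$ from Proposition \ref{lm.Kcont}; the remainder is routine pasting of one-dimensional reflected BSDEs.
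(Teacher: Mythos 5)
Your proposal is correct and follows essentially the same route as the paper: the cycle argument via (A8) and left limits of the c\`adl\`ag process $Y$ for the finiteness of $\widehat N$, continuity of $K$ from Proposition \ref{lm.Kcont} to kill $dK^{(\widehat a)}$, pasting plus uniqueness (Remark \ref{rem.4.3}) to identify $Y^{(\widehat a)}$ with $R^{(\widehat a)}$, and a comparison argument for the upper bound in (ii) --- the paper applies the comparison theorem once to the globally pasted equation \eqref{eq.4.4} on $[t,T]$ rather than inducting backward interval by interval, but this is only a presentational difference. The one harmless slip is a sign: the effective terminal datum of $R^{(a)}$ on $[\theta_{n-1},\theta_n]$ is $R^{(a)}_{\theta_n}-c^{\alpha_{n-1},\alpha_n}(\theta_n)$, not $R^{(a)}_{\theta_n}+c^{\alpha_{n-1},\alpha_n}(\theta_n)$, which is exactly what your stated inequality $Y^{\alpha_{n-1}}_{\theta_n}\ge Y^{\alpha_n}_{\theta_n}-c^{\alpha_{n-1},\alpha_n}(\theta_n)$ is needed for.
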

\begin{proof}
Fix $j\in\{1,2,\ldots,d\}$ and $t\in[0,T]$, and suppose that there exists some $a\in\widetilde\A^j_t$. For $i=1,\dots, d$ we have
\begin{equation*}
\begin{split}
Y^{i}_{(s\vee\theta_n)\wedge\theta_{n+1}}&= Y^{i}_{\theta_{n+1}}+ \int_{(s\vee\theta_n)\wedge\theta_{n+1}}^{\theta_{n+1}} f^{i}(r,Y^{i}_r)\,dr + \int_{(s\vee\theta_n)\wedge\theta_{n+1}}^{\theta_{n+1}} dV^{i}_r \\
&\quad + \int_{(s\vee\theta_n)\wedge\theta_{n+1}}^{\theta_{n+1}} dK^{i}_r -
\int_{(s\vee\theta_n)\wedge\theta_{n+1}}^{\theta_{n+1}} dA^{i}_r - \int_{(s\vee\theta_n)\wedge\theta_{n+1}}^{\theta_{n+1}}
dM^{i}_r,\qquad s\in[t,T],
\end{split}
\end{equation*}
Hence, for every $s\in[t,T]$,
\begin{equation}
\label{eq.4.04}
\begin{split}
Y^{\alpha_n}_{(s\vee\theta_n)\wedge\theta_{n+1}}&= Y^{\alpha_n}_{\theta_{n+1}}+ \int_{(s\vee\theta_n)\wedge\theta_{n+1}}^{\theta_{n+1}} f^{\alpha_n}(r,Y^{\alpha_n}_r)\,dr + \int_{(s\vee\theta_n)\wedge\theta_{n+1}}^{\theta_{n+1}} dV^{\alpha_n}_r \\
&\quad + \int_{(s\vee\theta_n)\wedge\theta_{n+1}}^{\theta_{n+1}} dK^{\alpha_n}_r -
\int_{(s\vee\theta_n)\wedge\theta_{n+1}}^{\theta_{n+1}} dA^{\alpha_n}_r\\
&\quad - \sum_{i=1}^d\ind{B^i_n}\int_{(s\vee\theta_n)\wedge\theta_{n+1}}^{\theta_{n+1}}
dM^{i}_r,
\end{split}
\end{equation}
where $B^i_n=\{\alpha_n=i\}.$ For $s\in[t,T]$ we set
\[
Y^{(a)}_s=Y^{a(s)}_s,\qquad V^{(a)}_s=\sum_{n=0}^{N-1} \int_{\theta_n\wedge s}^{\theta_{n+1}\wedge s} dV^{\alpha_n}_r, \qquad M^{(a)}_s=\sum_{n=0}^{N-1}\sum_{i=1}^d\ind{B^i_n}\int_{\theta_n\wedge s}^{\theta_{n+1}\wedge s}dM^{i}_r,
\]
\[
K^{(a)}_s=\sum_{n=0}^{N-1} \int_{\theta_n\wedge s}^{\theta_{n+1}\wedge s} dK^{\alpha_n}_r,\qquad A^{(a)}_s=\sum_{n=0}^{N-1} \int_{\theta_n\wedge s}^{\theta_{n+1}\wedge s} dA^{\alpha_n}_r.
\]
Note that $\Delta A^{(a)}_{\theta_n}=\Delta A^{\alpha_{n-1}}_{\theta_n}$ for $n\in\N$ and, by Lemma \ref{lm.4.2}, $M^{(a)}$ is a local martingale. Using  \eqref{eq.4.04} and the fact that $(Y,M,K,A)$ is a solution of $\ors(\xi,f+dV,H,U)$ one can check that 
\begin{equation}\label{eq.4.4}
\begin{cases}
Y^{(a)}_s= \xi^{a(T)}+ \int_s^T f^{a(r)}(r,Y^{(a)}_r)\,dr
+ \int_s^T dV^{(a)}_r + \int_s^T dK^{(a)}_r
- \int_s^T dA^{(a)}_r\smallskip\\
\qquad\qquad -\sum_{n=1}^{N} \left(Y^{\alpha_n}_{\theta_n}
- Y^{\alpha_{n-1}}_{\theta_n} \right)\ind{(s,T]}(\theta_n)
- \int_s^T dM^{(a)}_r,\smallskip\\
Y^{(a)}_s\leq U^{(a)}_s,\qquad s\in[t,T],\smallskip \\
\int_t^T (U^{(a)}_{r-} - Y^{(a)}_{r-})\, dA^{(a)}_r=0,
\end{cases}
\end{equation}
where $U^{(a)}_s=U^{a(s)}_s$ for $s\in[t,T]$.  Hence $(Y^{(a)},
M^{(a)}, A^{(a)})$ is a~solution on $[t,T]$ of the equation
$\urs(\xi^{a(T)}, f^{(a)}+dV^{(a)}-dV^{Y^a}+dK^{(a)}, U^{(a)})$,
with $f^{(a)}(s,y)=f^{a(s)}(s,y)$ for $s\in[t,T],$ $y\in\R$ and
\[
V^{Y^a}_s = \sum_{n=1}^{N} \left(Y^{\alpha_n}_{\theta_n} -
Y^{\alpha_{n-1}}_{\theta_n} \right)\ind{(0,s]}(\theta_n), \quad
s\in[t,T].
\]
We now show that $\widehat{a}\in\widetilde\A^j_t.$ By the
construction, $(\widehat\alpha_0,\widehat\theta_0)=(j,t)$, so we have to
show that $P(B)=0$, where
$B=\bigcap_{j=1}^\infty\{\widehat{\theta}_n<T\}.$ Observe that
$B\in\F_T$ and for $n=1,2,\ldots$ we have
\begin{equation}\label{eq.4.7}
Y^{\widehat{\alpha}_{n-1}}_{\widehat{\theta}_n} =
h_{\widehat{\alpha}_{n-1},\widehat{\alpha}_n}
\left(\widehat{\theta}_n,
Y^{\widehat{\alpha}_{n}}_{\widehat{\theta}_n}\right)
\end{equation}
on $B$. Since $\{\widehat{\alpha}_n\}$ takes values in
$\{1,\ldots,d\},$ there exists $k\geq 2$ such that
$\widehat{\alpha}_{k-1}=\widehat{\alpha}_0$. Since
$\{(\widehat{\alpha}_{n},\widehat{\alpha}_{n+1},
\ldots,\widehat{\alpha}_{n+k-1})\}_{n=1,\ldots}$  takes values
in $\{1,\ldots,d\}^k$, there is a vector of indexes
$(j_1,j_2,\ldots, j_{k})$ and a subsequence $\{n_m\}$ such that
\[
(\widehat{\alpha}_{n_m},\widehat{\alpha}_{n_m+1},\ldots,\widehat{\alpha}_{n_m+k-1})=(j_1,j_2,\ldots, j_{k}).
\]
The sequence $\{\widehat{\theta}_n\}$ is non-decreasing  and
bounded by $T$, so it is convergent. Assume that
$\widehat{\theta}_n\nearrow\widehat{\theta}_\infty.$ Passing to
the limit in  \eqref{eq.4.7} along this subsequence gives
\begin{equation*}
\begin{split}
Y^{j_1}_{\widehat{\theta}_\infty-} &= h_{j_1,j_2}
\left(\widehat{\theta}_\infty,
Y^{j_2}_{\widehat{\theta}_\infty-}\right),\\
Y^{j_2}_{\widehat{\theta}_\infty-} &= h_{j_2,j_3}
\left(\widehat{\theta}_\infty,
Y^{j_3}_{\widehat{\theta}_\infty-}\right),\\
& \;\;\vdots\\
Y^{j_{k}}_{\widehat{\theta}_\infty-} &
= h_{j_{k},j_1}\left(\widehat{\theta}_\infty,
Y^{j_1}_{\widehat{\theta}_\infty-}\right)
\end{split}
\end{equation*}
on the set $B$. By (A8), this implies that $P(B)=0.$ Thus
$\widehat{a}\in\A^j_t.$ By \eqref{eq.4.7},
\begin{equation}\label{eq.4.9}
Y^{\widehat\alpha_{n-1}}_{\widehat\theta_n} = Y^{\widehat\alpha_n}_{\widehat\theta_n}-c^{\widehat\alpha_{n-1},\widehat\alpha_n}(\widehat\theta_n).
\end{equation}
By Proposition \ref{lm.Kcont}, $K$ is continuous. Hence $\int_t^T
dK^{(\widehat{a})}_r=0$ by the construction of $\widehat{a}.$
Therefore from  \eqref{eq.4.4} and \eqref{eq.4.9} it follows that
the triple   $(Y^{(\widehat a)},M^{(\widehat a)},A^{(\widehat a)})$ is a
solution of \eqref{eq.switching}. This  proves (i). 
By Remark \ref{rem.4.3}, the solution of \eqref{eq.switching} is unique. Thus
$Y^{(\widehat{a})}_s=R^{(\widehat{a})}_s$ for $s\in[t,T]$. In
particular,
\begin{equation}\label{eq.4.10}
Y^j_t=Y^{(\widehat{a})}_t=R^{(\widehat{a})}_t.
\end{equation}
On the other hand, for every $a\in\widetilde\A^j_t$ we have
\begin{equation*}
Y^{\alpha_{n-1}}_{\theta_n}\geq
H^{\alpha_{n-1}}_{\theta_n}(Y_{\theta_n})\geq
h_{\alpha_{n-1},\alpha_n}(\theta_n,
Y^{\alpha_n}_{\theta_n})=Y^{\alpha_n}_{\theta_n}-c^{\alpha_{n-1},
\alpha_n}(\theta_n)
\end{equation*}
and $dK^{(a)}\geq 0,$ so by \eqref{eq.switching}, \eqref{eq.4.4} and the comparison theorem \cite[Proposition 2.1]{klimsiak2014},
\begin{equation*}
Y^{(a)}_s\geq R^{(a)}_s,\quad s\in[t,T],\; a\in\widetilde\A^j_t.
\end{equation*}
In particular $Y^{j}_t\geq R^{(a)}_t$ for $a\in\widetilde\A^j_t$,
which when combined with \eqref{eq.4.10}, completes the proof.
\end{proof}

\begin{wn}\label{cor.4.1}
Let the assumptions of Theorem \ref{prop.4.4} be satisfied. Then the equation $\ors(\xi,f+dV,H,U)$ has a~unique solution.
\end{wn}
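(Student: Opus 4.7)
The plan is to combine existence from Theorem \ref{tw.3} with the stochastic--control representation of Theorem \ref{prop.4.4}. Existence itself is immediate: the hypotheses of Theorem \ref{prop.4.4} subsume all of (A1)--(A7) and (M), so Theorem \ref{tw.3} directly produces a solution of $\ors(\xi,f+dV,H,U)$.

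For uniqueness, suppose $(Y,M,K,A)$ and $(Y',M',K',A')$ are two solutions. The key observation, and the most important feature of the Section \ref{sec4} setting, is that under the standing assumption $f^j(t,y)\equiv f^j(t,y^j)$ the reflected BSDE \eqref{eq.switching} defining $R^{(a)}$ depends only on the external data $\xi,f,V,U$ and the switching costs $\{c^{j,k}\}$; it does \emph{not} involve the particular solution of the oblique problem. Consequently, both the class $\widetilde\A^j_t$ and the processes $R^{(a)}$ are intrinsic to the data. Applying Theorem \ref{prop.4.4}(ii) to each of the two solutions I would obtain
\[
Y^j_t=\esssup_{a\in\widetilde\A^j_t}R^{(a)}_t=(Y')^j_t,\qquad t\in[0,T],\ j=1,\ldots,d,
\]
which, since both processes are c\`adl\`ag, gives $Y=Y'$ up to indistinguishability.

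Once $Y=Y'$ is known, I would finish as follows. By Remark \ref{rem.3.2}, for each $j$ both $(Y^j,M^j,K^j,A^j)$ and $(Y^j,(M')^j,(K')^j,(A')^j)$ solve the same one-dimensional doubly reflected BSDE $\rs(\xi^j,\,f^j(\cdot,Y^j)+dV^j,\,H^j(Y),\,U^j)$, whose upper and lower barriers are now fixed c\`adl\`ag processes satisfying the assumptions of Proposition \ref{stw.3.1}. Standard uniqueness for scalar doubly reflected BSDEs (from \cite[Theorem 4.2, Corollary 2.2]{klimsiak2014}, via the comparison principle applied to the difference of two solutions together with the Doob--Meyer uniqueness) then yields $M^j=(M')^j$, $K^j=(K')^j$ and $A^j=(A')^j$ for every $j$.

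The main obstacle lies precisely in the first step. It depends critically on two ingredients that are peculiar to Section \ref{sec4}: the diagonal structure $f^j(t,y)=f^j(t,y^j)$, which is exactly what decouples \eqref{eq.switching} from $Y$ (without it, $R^{(a)}$ itself would depend on the chosen solution and the representation argument would collapse), and the non-emptiness $\widetilde\A^j_t\neq\emptyset$ guaranteed by Theorem \ref{prop.4.4}(i), which is what makes the essential supremum meaningful. Once these are in place, the reduction of the vector uniqueness problem to the one-dimensional one is routine.
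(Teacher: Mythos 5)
Your proposal is correct and follows essentially the same route as the paper: existence via Theorem \ref{tw.3}, and uniqueness of $Y$ by applying the representation of Theorem \ref{prop.4.4}(ii) to both solutions, noting (as in Remark \ref{rem.4.3}) that the processes $R^{(a)}$ and the class $\widetilde\A^j_t$ depend only on the data. Your additional final step recovering $M$, $K$, $A$ from $Y$ via the scalar doubly reflected equation and the Doob--Meyer decomposition is a point the paper leaves implicit, and it is a welcome clarification.
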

\begin{proof}
The existence of a solution of $\ors(\xi,f+dV,H,U)$ follows from Theorem \ref{tw.3}. Uniqueness is a consequence of Theorem \ref{prop.4.4} and Remark \ref{rem.4.3}.
\end{proof}

\section*{Acknowledgements}

This research was supported by Polish National Science Centre (grant  no. \\ 2016/23/B/ST1/01543).

\end{document}